\def\bF{\ensuremath\mathbb{F}}
\def\bH{\ensuremath\mathbb{H}}
\def\bI{\ensuremath\mathbb{I}}
\def\bK{\ensuremath\mathbb{K}}
\def\bN{\ensuremath\mathbb{N}}
\def\bO{\ensuremath\mathbb{O}}
\def\bQ{\ensuremath\mathbb{Q}}
\def\bZ{\ensuremath\mathbb{Z}}
\def\to{\ensuremath\rightarrow}
\def\<{\ensuremath\langle}
\def\>{\ensuremath\rangle}
\DeclareMathOperator{\Irr}{Irr}
\newtheorem{thm}{Theorem}[section]
\newtheorem{mthm}{Theorem}
\newtheorem{lemma}[thm]{Lemma}
\newtheorem{prop}[thm]{Proposition}
\newtheorem{mprop}[mthm]{Proposition}
\newtheorem{cor}[thm]{Corollary}
\theoremstyle{definition}
\newtheorem{mconj}[mthm]{Conjecture}
\newtheorem{mprob}[mthm]{Problem}
\newtheorem{ex}[thm]{Example}
\newtheorem*{ex*}{Example}
\newcommand{\sO}{\mathcal{O}}
\newcommand{\Zp}{{\bZ}_{(p)}}
\newcommand{\ZG}{{\Zp}G}
\begin{document}
\title[A question by Roggenkamp]{
On a question by Roggenkamp about group algebras}
\author{Dylan Johnston}
\author{Dmitriy Rumynin}
\date{\today}
\address{Department of Mathematics, University of Warwick, Coventry, CV4 7AL, UK}

\subjclass{16S34, 16U40, 20C20, 20C15}
\keywords{Krull-Schmidt, semiperfect ring, group algebra, idempotent, finite group}

\begin{abstract}
We investigate whether the group algebra of a finite group over a localisation of the integers is semiperfect. The main result is a necessary and sufficient arithmetic criterion in the ordinary case. In the modular case, we propose a conjecture, which extends the criterion. 
\end{abstract}

\maketitle


\section*{Introduction}

In the present article we would like to contemplate the following problem, posed by Roggenkamp:
\begin{mprob} \label{Pr1} \cite[Kourovka Pr. 4.55]{Kourovka}
 Let $G$ be a finite group and $\Zp$ the localization at $p$. Does the Krull–Schmidt theorem hold for projective $\ZG$-modules?
\end{mprob} 

The first point to clarify is what is meant by the Krull-Schmidt theorem here. Note that in the literature the names of Azumaya and Remak are often attached to it as well, while it is normally formulated for the category of finite length modules with two slightly different, yet equivalent for finite length modules statements, cf. \cite{Berr_Keat,curtis1981methodsVol1}
vs.
\cite{Azu,Facch}. We can interpret Problem~\ref{Pr1} as three different questions about $\ZG$: 

\begin{mprob} \label{Pr2}
Is the category of finitely generated projective $\ZG$-modules a Krull-Schmidt category?
\end{mprob} 

\begin{mprob} \label{Pr3}
Can every projective $\ZG$-module $P$ be written as a direct sum of indecomposable projective modules
$P= \oplus_{i\in I} P_i$
in a unique, up to a permutation and isomorphisms way? 
\end{mprob} 

\begin{mprob} \label{Pr4}
Can every finitely generated projective $\ZG$-module $P$ be written as a direct sum of indecomposable projective modules
\begin{equation} \label{dir_sum}
P=P_1 \oplus \ldots \oplus P_n
\end{equation}
in a unique, up to a permutation and isomorphisms way? 
\end{mprob} 
These problems are interdependent:
\[
\mbox{Yes to Pr. 2}
\Longrightarrow
\mbox{Yes to Pr. 3}
\Longrightarrow
\mbox{Yes to Pr. 4}
\]

In the present paper we address only Problem~\ref{Pr2}. 
The short answer to it is {\bfseries{no}}. 
The reader can benefit from reading the expository paper by Krause \cite{Krause}. The following well-known fact contains the definitions of a semiperfect ring and a Krull-Schmidt category.
\begin{mprop} \label{KRS_cond_gen}
\cite[Prop. 4.1]{Krause}, 
\cite[Th. 2.1]{Bass},
\cite[Ch. 7]{Berr_Keat}
Let $R$ be a ring, $J=J(R)$ its Jacobson radical. The following conditions are equivalent:  
\begin{itemize}
    \item $R$ is semiperfect.
    \item The category of finitely generated projective left $R$-modules is a Krull-Schmidt category.
    \item Every finitely generated projective left $R$-module $P$ admits a decomposition~\eqref{dir_sum}
where each $P_i$ has a local endomorphism ring.
    \item The regular module ${}_RR$ admits a decomposition~\eqref{dir_sum}
where each $P_i$ has a local endomorphism ring.
    \item Every simple left $R$-module admits a projective cover.
    \item Every finitely generated left $R$-module admits a projective cover.
    \item The ring $R/J$ is semisimple artinian and every idempotent in $R/J$ can be lifted to $R$. 
\end{itemize}
\end{mprop}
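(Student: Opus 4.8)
The plan is to prove the five statements equivalent through a single cycle of implications $(1)\Rightarrow(5)\Rightarrow(4)\Rightarrow(3)\Rightarrow(2)\Rightarrow(1)$, organised around the standard dictionary between finite direct-sum decompositions of ${}_RR$ and complete orthogonal sets of idempotents: a decomposition ${}_RR = Re_1 \oplus \cdots \oplus Re_n$ corresponds to orthogonal idempotents with $e_1 + \cdots + e_n = 1$, under which $\operatorname{End}_R(Re_i) \cong e_iRe_i$. I would first collect three tools. First, for an idempotent $e$ one has $J(eRe) = eJe$ together with a natural isomorphism $eRe/eJe \cong \bar e\,\bar R\,\bar e$, where $\bar{(\,\cdot\,)}$ is reduction modulo $J := J(R)$; in particular $eRe$ is local exactly when $\bar e \bar R \bar e$ is a division ring. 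Second, Nakayama's lemma, which identifies $\operatorname{rad}(Re) = Je$ and shows that a submodule of $Re$ is superfluous precisely when it is contained in $Je$, so that $Re \twoheadrightarrow Re/Je$ is a projective cover whenever $Re/Je$ is simple. Third, Azumaya's refinement of Krull--Schmidt: a module written as a finite direct sum of submodules with local endomorphism rings has the exchange property, so such a decomposition is unique up to permutation and isomorphism.

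Four of the arrows are then largely formal. For $(2)\Rightarrow(1)$, any finitely generated projective $P$ is a direct summand of some $R^m$, hence of a finite direct sum of the $Re_i$; since each $e_iRe_i$ is local, Azumaya's theorem supplies an essentially unique decomposition of $P$ into copies of the $Re_i$, which is exactly the stated Krull--Schmidt property. The arrow $(4)\Rightarrow(3)$ is immediate, simple modules being finitely generated. For $(5)\Rightarrow(4)$ I would lift a complete orthogonal set of primitive idempotents of the semisimple ring $\bar R$ to orthogonal idempotents $e_1,\dots,e_n$ of $R$, so that each $Re_i \twoheadrightarrow Re_i/Je_i$ is the projective cover of a simple module; for general finitely generated $M$ one covers the semisimple $\bar R$-module $M/JM$ by an appropriate finite sum $P$ of the $Re_i$, lifts to a map $P \to M$ that is surjective by Nakayama, and verifies via the second tool that its kernel is superfluous. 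For $(3)\Rightarrow(2)$ one uses that the projective cover of a simple module is a local module with local endomorphism ring; the subtle point is to deduce that $\bar R$ is semisimple artinian, after which ${}_RR$---being the projective cover of the semisimple module $R/J$---is isomorphic to the corresponding finite direct sum of these local covers, yielding (2). Establishing semisimplicity of $\bar R$ from the mere existence of covers is the one nontrivial point of this step, which I would isolate as a separate lemma.

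The remaining arrow $(1)\Rightarrow(5)$ carries the main difficulty. Krull--Schmidt gives a decomposition ${}_RR = Re_1 \oplus \cdots \oplus Re_n$ into indecomposables, finite because the $e_i$ sum to $1$, and the crux is to upgrade indecomposability of $Re_i$ to \emph{locality} of $e_iRe_i$. Granting this, $Re_i/Je_i$ is simple, so $\bar R = \bigoplus_i Re_i/Je_i$ is a finite direct sum of simple modules, hence semisimple artinian, and a conjugation-and-lifting argument promotes the complete primitive set just obtained to lifts of arbitrary idempotents of $\bar R$, giving (5). The obstacle is precisely the upgrade: an indecomposable module can perfectly well have a non-local endomorphism ring (for instance $\bZ$ over itself), so one cannot avoid invoking the full \emph{uniqueness} clause of (1). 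I would argue by contradiction: if $e_iRe_i$ were not local then, having only the trivial idempotents, it would contain an element $a$ with both $a$ and $e_i - a$ non-units, and a suitable endomorphism of $Re_i \oplus Re_i$ built from $a$ and $e_i - a$ produces two decompositions into indecomposables not matched by any permutation, contradicting (1). This converse to Azumaya's theorem is the one step that is neither formal nor a routine radical computation, and it is where I expect the real work to lie.
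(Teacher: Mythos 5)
The paper itself offers no proof of this proposition (it is quoted from Krause, Bass and Berrick--Keating), so your proposal can only be judged against the standard arguments, and there it has one fatal defect. Your step $(1)\Rightarrow(5)$ cannot be completed, because under the literal reading of (1) used in the statement -- ``unique decomposition into indecomposable projectives'' -- the implication is \emph{false}. Take $R=\bZ$: every finitely generated projective $\bZ$-module is free of well-defined rank, so (1) holds (the only indecomposable finitely generated projective is $\bZ$, and the multiset of summands is determined by the rank); yet $\bZ/J(\bZ)=\bZ$ is not semisimple and $\bZ/p\bZ$ has no projective cover, so (2)--(5) all fail. Your own example thus defeats your own argument: in $e_iRe_i=\bZ$ one can indeed take $a=3$, so that $a$ and $1-a=-2$ are both non-units, but $\bZ\oplus\bZ$ still decomposes uniquely into indecomposables (every direct summand of $\bZ^2$ is free), so no endomorphism ``built from $a$ and $e_i-a$'' can produce two inequivalent decompositions. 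The ``converse to Azumaya'' you are hoping for is simply false. What makes the proposition correct in its sources is that they define the Krull--Schmidt property differently: in Krause's formulation, condition (1) reads ``every finitely generated projective decomposes into a finite direct sum of modules with \emph{local endomorphism rings}'', i.e.\ it is condition (2) extended to all finitely generated projectives, and then $(1)\Rightarrow(2)$ is the triviality of specialising to ${}_RR$. So the honest options are: adopt that reading, after which your cycle closes (your $(2)\Rightarrow(1)$ via the exchange property and Azumaya uniqueness is correct), or record that with the literal gloss the equivalence fails. The contradiction argument you sketch is not repairable.

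A second, smaller gap: in $(3)\Rightarrow(2)$ you correctly isolate, but do not prove, the lemma that existence of projective covers of all simple modules forces $R/J$ to be semisimple artinian -- and this is precisely the nontrivial content of that arrow (the hard half of M\"uller's theorem), so it cannot stay a black box. The standard argument is short enough to include: if $P\to S$ is a projective cover of a simple module, then $P$ is cyclic (lift a generator of $S$; superfluity of the kernel forces the lift to generate $P$), hence $P\cong Re$ for an idempotent $e$, hence $S\cong P/JP\cong \bar{R}\bar{e}$ is isomorphic to a direct summand of $\bar{R}$. Consequently every maximal left ideal $\bar{\mathfrak{m}}\subset\bar{R}$ is a direct summand with simple complement, because the surjection $\bar{R}\to\bar{R}/\bar{\mathfrak{m}}\cong\bar{R}\bar{e}$ splits. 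If $\operatorname{soc}(\bar{R})$ were a proper left ideal it would lie in some maximal $\bar{\mathfrak{m}}$, contradicting the fact that the simple complement of $\bar{\mathfrak{m}}$ lies in the socle; hence $\bar{R}=\operatorname{soc}(\bar{R})$ is semisimple, and it is artinian because $1$ lies in a finite sum of minimal left ideals. With this lemma supplied, your arrows $(5)\Rightarrow(4)\Rightarrow(3)$, $(3)\Rightarrow(2)$ and $(2)\Rightarrow(1)$ are all correct as sketched, and the cycle proves the proposition for the corrected reading of (1).
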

Now we can explain the negative answer to Problem~\ref{Pr2}, following Woods \cite{Woods}.
Let $G = C_3$, the cyclic group of order $3$, $\bF_q$ -- the finite field of size $q$. 
Observe that 
\begin{equation} \label{QC3}
\mathbb{Q}C_3 \cong \bQ \oplus \bQ(\sqrt[3]{1}), \quad
\mathbb{F}_pC_3 \cong 
\begin{cases}
\mathbb{F}_p \oplus \mathbb{F}_p \oplus \mathbb{F}_p \, ,& \ \mbox{ if } p \equiv 1 \mod 3,\\
\mathbb{F}_p \oplus \bF_{p^2} \, ,& \ \mbox{ if } p \equiv 2 \mod 3.
\end{cases}
\end{equation}
If $p \equiv 1 \mod 3$, $\bF_p G$ has 3 primitive idempotents: too many to be lifted, so that $\ZG$ is not semiperfect.
On the other hand, if 
$p \equiv 2 \mod 3$, both primitive idempotents can be lifted to $\ZG$, making it semiperfect:
\[
\frac{1}{3} (e + x +x^2), \ 
\frac{1}{3} (2e  - x - x^2), 
\ \mbox{ where } \ G = \langle x \rangle \, .
\]

Yet we are not satisfied with this brief negative answer. 
Consequently we 
reinterpret the original question:
\begin{mprob} \label{prob2}
  Let $G$ be a finite group and $\Zp$ the localisation at a prime $p$. 
Find necessary and sufficient conditions on $p$ and $G$ for the ring $\ZG$ to be semiperfect.  
\end{mprob} 

Let us summarize quickly what is already known about it
\begin{mprop} \label{KRS_facts}
The following statements hold 
for a finite group $G$.
\begin{enumerate}
\newcounter{PPS}
    \item If $N\unlhd G$ and $\ZG$ is semiperfect, then $\Zp G/N$ is semiperfect \cite[Lemma 2.2]{Bass}.
    \item $\ZG$ is semiperfect if and only if all idempotents of $\bF_p G$ can be lifted to $\ZG$ \cite[Prop. 8]{Burg}.
    \item Suppose $N\unlhd G$, $H\leq G$, $HN=G$ and $N$ is a $p$-group.
    If $\Zp H$ is semiperfect, then $\ZG$ is semiperfect \cite[Prop. 4.2]{Woods}.
       \item 
    $\Zp S_3$ is semiperfect for every $p$
     \cite[Lemma 6.1]{Woods}.
     \item $\ZG$ is semiperfect if and only $\ZG$ is clean (follows from \cite[Th. 1.1]{Immo}).
      \setcounter{PPS}{\theenumi}  
    \end{enumerate}
Assume further that $G$ is an abelian group of exponent $n=mp^a$, $p \nmid m$ with a Sylow $p$-subgroup $P$.    
    \begin{enumerate}
   \setcounter{enumi}{\thePPS} 
    \item $\ZG$ is semiperfect if and only if $\Zp G/P$ is semiperfect \cite[Lemma 5.1]{Woods}.
    \item $\ZG$ is semiperfect if and only if $\Zp C_n$ is semiperfect \cite[Prop. 5.7]{Woods}.
    \item 
    $\ZG$ is semiperfect if and only if 
    each monic factor of $z^m-1\in \bF_p [z]$
    can be lifted to $\Zp [z]$  \cite[Th. 5.8]{Woods}.
    \item $\ZG$ is semiperfect if and only $p$ is a primitive root of 1 modulo $d$ for each $d |m$ \cite[Lemma 2.6]{Immo}.
\end{enumerate}
\end{mprop}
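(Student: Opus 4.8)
The plan is to derive the arithmetic criterion of Proposition~\ref{KRS_facts}(9) from the polynomial criterion (8), treating items (6)--(7) as the reduction that lets us assume $G = C_m$ with $p \nmid m$: indeed (6) applied to $C_n$ together with (7) shows that $\ZG$ is semiperfect iff $\Zp C_m$ is, so it suffices to characterise when every monic factor of $z^m - 1 \in \bF_p[z]$ lifts to a monic factor of $z^m - 1 \in \Zp[z]$.

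First I would record the two factorisations of $f(z) = z^m - 1$. Since $p \nmid m$, the polynomial $f$ is separable modulo $p$, so over $\bF_p$ it splits into distinct monic irreducibles $\bar g_1 \cdots \bar g_k$; the monic divisors of $\bar f$ are then exactly the $2^k$ partial products $\prod_{i \in T}\bar g_i$ with $T \subseteq \{1,\dots,k\}$, and these index the idempotents of $\bF_p C_m$. Over $\Zp$, on the other hand, $f = \prod_{d \mid m}\Phi_d$, where $\Phi_d$ is the $d$-th cyclotomic polynomial. Each $\Phi_d$ lies in $\bZ[z]$, is monic, and is irreducible over $\bQ$; as $\Zp$ is a UFD with fraction field $\bQ$, Gauss's lemma shows the $\Phi_d$ remain irreducible over $\Zp$, so this is the irreducible factorisation of $f$ in $\Zp[z]$. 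Consequently every monic divisor of $f$ over $\Zp$ has the form $\prod_{d \in S}\Phi_d$, and its reduction mod $p$ is $\prod_{d \in S}\bar\Phi_d$.

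The heart of the argument is the comparison of these two factorisations. The liftable monic factors of $\bar f$ are precisely the products $\prod_{d \in S}\bar\Phi_d$ of whole reduced cyclotomic factors, whereas a general monic factor is an arbitrary product of the $\bar g_i$. Because the $\bar g_i$ refine the $\bar\Phi_d$ (each $\bar\Phi_d$ is a product of some of the $\bar g_i$), every monic factor is liftable iff each $\bar\Phi_d$ is already irreducible over $\bF_p$. Finally I would identify this irreducibility condition: the roots of $\Phi_d$ in $\overline{\bF_p}$ are the primitive $d$-th roots of unity, on which Frobenius acts with orbits of size $\mathrm{ord}_d(p)$, so $\bar\Phi_d$ splits into $\phi(d)/\mathrm{ord}_d(p)$ irreducibles; thus $\bar\Phi_d$ is irreducible iff $\mathrm{ord}_d(p) = \phi(d)$, i.e. iff $p$ generates $(\bZ/d\bZ)^\times$, i.e. iff $p$ is a primitive root modulo $d$. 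Ranging over all $d \mid m$ gives (9).

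The step I expect to be the main obstacle --- or at least the one most in need of care --- is the contrast between $\Zp$ and its completion. Over $\bZ_p$ Hensel's lemma would lift the entire $\bF_p$-factorisation, so $\bZ_p C_m$ is always semiperfect and there is nothing to prove; the whole phenomenon lives in the failure of $\Zp$ to be Henselian, which forces the coarse cyclotomic factorisation and obstructs lifting exactly when some $\Phi_d$ becomes reducible modulo $p$. Pinning down that the monic divisors over $\Zp$ are only the subset-products of the $\Phi_d$, and not something finer, is what makes the criterion bite; the small cases $d = 1, 2$ should be checked to be vacuously fine.
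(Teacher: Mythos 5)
Your proposal is correct as far as it goes, but be clear about its coverage: Proposition~\ref{KRS_facts} is a nine-item survey whose items the paper justifies purely by citations (Bass, Burgess, Woods, Immormino), and your argument establishes only item (9), taking items (6)--(8) as black boxes and not touching (1)--(5) at all --- which, to be fair, mirrors what the paper itself does. For the part you do prove, every step checks out: the reduction to $C_m$ via (6)--(7) is legitimate; unique factorisation in $\Zp[z]$ together with Gauss's lemma does pin down the monic divisors of $z^m-1$ over $\Zp$ as exactly the subset products of cyclotomic polynomials; the refinement argument (each reduced cyclotomic factor is a product of the distinct irreducibles $\bar g_i$, so all monic factors over $\bF_p$ lift iff each reduced cyclotomic polynomial stays irreducible) is watertight because separability makes the $\bar g_i$ distinct; and the Frobenius-orbit count correctly equates that irreducibility with $p$ generating $(\bZ/d\bZ)^\times$. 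Your route is genuinely different from the paper's. The paper handles the last two items together in its Section 2 Example on $C_n$, where $\bQ C_n \cong \bigoplus_{d\mid n}\bQ[z]/(\Psi_d(z))$, all Schur indices equal $1$, and Proposition~\ref{KRS_cond_2} (lifting of simple modules over a maximal order) combined with a finite-field theorem cited from Lidl yields, in one stroke, the equivalence of semiperfectness, irreducibility of each $\Psi_d$ modulo $p$, and the primitive-root condition. That structural argument proves (8) and (9) simultaneously and does not presuppose Woods's polynomial criterion; yours is conditional on (8), but in exchange it is elementary polynomial arithmetic and supplies explicitly the combinatorial bridge between the lifting criterion (8) and the arithmetic criterion (9) that the paper's Example leaves implicit. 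Your closing remark about the phenomenon living in the failure of $\Zp$ to be Henselian is also consistent with the paper's observation that $\widehat{\Zp}G$ is always semiperfect.
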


Notice further that a complete satisfactory answer to Problem~\ref{prob2} is known for abelian $G$ \cite[Th. 2.11 and Th. 3.5]{Immo}, yet we skip it here as it boils down to getting to the bottom of the arithmetic  of the last statement of Proposition~\ref{KRS_facts}.

For general $G$ and $p$, the ring $\ZG /J$ is semisimple and artinian -- it is even a finite dimensional algebra. So the real issue is lifting of idempotents. The idempotents can be lifted to $\widehat{\Zp}G$, where $\widehat{\Zp}$ is the $p$-adic integers. This is a classical well-known topic \cite{Berr_Keat,curtis1981methodsVol1}: 
$\widehat{\Zp}G$ is always semiperfect.
Here, in this paper, we are trying to grapple with 
the non-complete local ring $\Zp$.

We can give a fully satisfactory answer to Problem~\ref{prob2} only in the ``ordinary'' case. The answer can be given in terms of the standard arithmetic information attached to the complex irreducible characters $\chi\in \Irr(G)$. 
\begin{mthm} \label{Main_Result}
Suppose $p$ doesn't divide $|G|$. 
Then the ring  $\mathbb{Z}_{(p)}G$ is semiperfect if and only if both of the following conditions hold:
\begin{enumerate}[label=\roman*),left=0ex]
\item For each $\chi\in\Irr(G)$ its Schur index $\iota (\chi)$ is $1$.
\item For each $\chi\in\Irr(G)$ the prime $p$ is inert in the field of character values $\bQ(\chi)$
\end{enumerate}    
\end{mthm}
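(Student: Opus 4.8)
The plan is to reduce semiperfectness of $\ZG$ to a local problem at each Wedderburn component of $\bQ G$ and then settle that problem by counting orthogonal idempotents. First I would set up the reductions. Since $p\nmid|G|$, Maschke's theorem makes both $\bQ G$ and $\bF_pG=\ZG/p\ZG$ semisimple, so $J(\ZG)=p\ZG$ and $\ZG/J\cong\bF_pG$ is semisimple artinian; by Proposition~\ref{KRS_facts}(2) semiperfectness is precisely the liftability of all idempotents of $\bF_pG$ to $\ZG$. I would then invoke the standard fact that for $p\nmid|G|$ the order $\ZG$ is a maximal $\Zp$-order in $\bQ G$. Writing the Wedderburn decomposition $\bQ G\cong\prod_\chi A_\chi$ with $A_\chi\cong M_{n_\chi}(D_\chi)$ indexed by Galois classes of $\chi\in\Irr(G)$, where $D_\chi$ is a division algebra with centre $\bQ(\chi)$ and $\dim_{\bQ(\chi)}D_\chi=\iota(\chi)^2$, I note that the central idempotents $e_\chi$ lie in $\ZG$ (their coefficients have denominators dividing $|G|$), so $\ZG=\prod_\chi\Lambda_\chi$ with each $\Lambda_\chi=e_\chi\ZG$ a maximal order in $A_\chi$. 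Lifting idempotents of $\bF_pG$ is then equivalent to lifting idempotents in each $\Lambda_\chi/p\Lambda_\chi$ separately.

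For a fixed component I would work with $S(\Lambda_\chi)$, the largest size of a family of nonzero pairwise-orthogonal idempotents of $\Lambda_\chi$, and the analogue $S(\bar\Lambda_\chi)$ for $\bar\Lambda_\chi:=\Lambda_\chi/p\Lambda_\chi$. If idempotents lift, a complete system of primitive orthogonal idempotents of the semisimple ring $\bar\Lambda_\chi$ lifts to an orthogonal family of nonzero idempotents of $\Lambda_\chi$ (an idempotent in $p\Lambda_\chi\subseteq J$ must vanish), giving $S(\Lambda_\chi)\ge S(\bar\Lambda_\chi)$. On the other hand $\Lambda_\chi\subseteq A_\chi=M_{n_\chi}(D_\chi)$ with $D_\chi$ a division ring, so any orthogonal family of nonzero idempotents has at most $n_\chi$ members, whence $S(\Lambda_\chi)\le n_\chi$. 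I emphasise that this bound must be argued directly from the division-ring structure, so as not to presuppose the Krull--Schmidt property under investigation. The upshot is a clean necessary condition: liftability forces $S(\bar\Lambda_\chi)\le n_\chi$.

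Next I would compute $S(\bar\Lambda_\chi)$ via completion, writing $\Lambda_\chi\otimes_{\Zp}\widehat{\Zp}=\prod_{\mathfrak p\mid p}\hat\Lambda_{\chi,\mathfrak p}$, a product of maximal orders in the $\bQ(\chi)_{\mathfrak p}$-algebras $A_\chi\otimes_{\bQ(\chi)}\bQ(\chi)_{\mathfrak p}$. The crucial arithmetic input is that, because $p\nmid|G|$, the prime $p$ is unramified in $\bQ(\chi)$ and every local Schur index above $p$ equals $1$: indeed $\widehat{\Zp}G$ is a maximal order whose reduction is the semisimple algebra $\bF_pG$, and comparing the centre of each matrix block of $\bF_pG$ with the residue field of the corresponding local division algebra forces that residue algebra to be commutative, i.e. the local index is $1$. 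Granting this, the reduction of $\hat\Lambda_{\chi,\mathfrak p}$ is a single matrix block $M_{n_\chi\iota(\chi)}(k_{\mathfrak p})$, and summing over the $t_\chi:=\#\{\mathfrak p\mid p\}$ primes gives $S(\bar\Lambda_\chi)=n_\chi\,\iota(\chi)\,t_\chi$.

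Finally I would combine the estimates: liftability in the component forces $n_\chi\iota(\chi)t_\chi\le n_\chi$, hence $\iota(\chi)=1$ and $t_\chi=1$, and since $p$ is unramified, $t_\chi=1$ says exactly that $p$ is inert in $\bQ(\chi)$; this yields necessity of (i) and (ii). For sufficiency, when $\iota(\chi)=1$ and $p$ is inert, $\Lambda_\chi\cong M_{n_\chi}(\sO)$ with $\sO$ the integral closure of $\Zp$ in $\bQ(\chi)$ a DVR, and since the reduction map $\mathrm{GL}_{n_\chi}(\sO)\to\mathrm{GL}_{n_\chi}(\sO/p)$ is surjective, every idempotent of $M_{n_\chi}(\sO/p)$ is conjugate to a standard diagonal one and therefore lifts; as this holds in every component, $\ZG$ is semiperfect. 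I expect the main obstacle to be the arithmetic claim that the local Schur indices above $p$ are trivial for $p\nmid|G|$, together with cleanly reading off $S(\bar\Lambda_\chi)$ from the reduction theory of maximal orders; the idempotent-counting inequalities and the DVR lifting step are comparatively routine.
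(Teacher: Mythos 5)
Your argument is correct, but it reaches the theorem by a genuinely different route from the paper's, at least for the ``only if'' half. The paper first proves a general criterion (Proposition~\ref{KRS_cond_2}): for a \emph{maximal} $\Zp$-order with semisimple reduction, semiperfectness is equivalent to a term-by-term matching of the two Artin--Wedderburn decompositions ($k=t$, $n_i=m_i$, $\iota(F_i)=\iota(K_i)$) together with the existence of orders $L_i\subseteq K_i$ reducing to the $F_i$; the theorem then falls out of Wedderburn's little theorem, since the $F_i$ are finite fields, forcing $\iota(K_i)=\iota(F_i)=1$ and forcing $\sO_{\bQ(\chi)}/(p)$ to be a field, i.e.\ $p$ inert. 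That criterion is itself proved by module-lifting technology (liftings of simples, projective covers, endomorphism rings of lattices). You bypass this machinery: after the same splitting $\ZG=\prod_\chi\Lambda_\chi$ into maximal orders, you compare the maximal cardinality of an orthogonal family of nonzero idempotents before and after reduction, bounding it above by $n_\chi$ (from $\Lambda_\chi\subseteq M_{n_\chi}(D_\chi)$) and computing it as $n_\chi\iota(\chi)t_\chi$ for the reduction via completion ($t_\chi$ the number of primes of $\bQ(\chi)$ over $p$); the inequality $n_\chi\iota(\chi)t_\chi\le n_\chi$ then yields both conditions simultaneously, which makes it transparent why ``index one'' and ``inert'' are exactly the right conditions. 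The sufficiency halves are essentially the same in both treatments (structure theorem for maximal orders plus an explicit lift, yours by diagonalising idempotents and using surjectivity of $\mathrm{GL}_{n_\chi}(\sO)\to\mathrm{GL}_{n_\chi}(\sO/p)$). Your route buys a concrete counting argument free of projective covers; its cost is the local input the paper never needs -- completions, local Schur indices, maximal orders over complete discrete valuation rings. Three points to tighten: (a) passing from ``every idempotent lifts'' to ``an orthogonal system lifts orthogonally'' is a standard but genuine theorem, valid here because $p\Lambda_\chi=J(\Lambda_\chi)$ (alternatively, extract the orthogonal idempotents directly from the semiperfect decomposition ${}_RR=P_1\oplus\cdots\oplus P_n$ of Proposition~\ref{KRS_cond_gen}); (b) in your key local claim the operative property is semisimplicity, not commutativity: if either the local index or the ramification index exceeds $1$, then $\Delta/p\Delta$ (for $\Delta$ the local maximal order) has a nonzero nilpotent ideal, contradicting semisimplicity of $\bF_pG$, and this single observation gives both unramifiedness and triviality of all local indices; (c) the identification $\Lambda_\chi\cong M_{n_\chi}(\sO)$ over the non-complete ring $\Zp$ needs the remark that the centre $\sO$ is a semilocal Dedekind domain, hence a PID, so the relevant lattice is free -- this is exactly the point where the paper invokes \cite[Th. 26.23]{curtis1981methodsVol1}.
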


The modular case, when $p\mid |G|$, is harder
to pinpoint. Yet we propose a conjecture
that yields a workable criterion in the modular case.

To state it, we recall some standard notation.

Let $R$ be a ring. Let $\mathbf{G}(R)$ and $\mathbf{K}(R)$ denote the Grothendieck rings over $\bZ$ of finitely generated $R$-modules and of finitely generated projective $R$-modules, respectively. Let $\mathbf{G}^+(R)$ and $\mathbf{K}^+(R)$ be the corresponding positive Grothendieck semigroups, i.e., $\bN$-spans of modules inside the Grothendieck groups. Consider the natural maps
\begin{equation} \label{KGmaps}
\mathbf{G}(\bQ G)
\xrightarrow{\alpha}
\mathbf{G}(\bF_p G)
\xleftarrow{\beta}
\mathbf{K}(\bF_p G)
\end{equation}
where $\alpha$ is the decomposition map \cite[Def. 16.20]{curtis1981methodsVol1}
and $\beta$ is the Cartan homomorphism 
\cite[Def. 18.4]{curtis1981methodsVol1}.

\begin{mconj} \label{KGconj}
The ring $\mathbb{Z}_{(p)}G$ is semiperfect if and only if 
$\alpha (\mathbf{G}^+(\bQ G))
\supseteq
\beta (\mathbf{K}^+(\bF_p G))$.
\end{mconj}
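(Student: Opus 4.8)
The plan is to recast both sides as statements about lifting projective modules and then to treat the two implications separately. First I would reduce semiperfectness to a lifting statement about lattices. Since $\Zp$ is local with residue field $\bF_p$ and $\ZG$ is a finite $\Zp$-order, one has $p\ZG\subseteq J(\ZG)$ and $\ZG/J(\ZG)\cong \bF_p G/J(\bF_p G)$, which is semisimple artinian; so by Proposition~\ref{KRS_cond_gen} together with Proposition~\ref{KRS_facts}(2), the ring $\ZG$ is semiperfect if and only if every idempotent of $\bF_p G$ lifts to $\ZG$, equivalently every finitely generated projective $\bF_p G$-module is the reduction of a projective $\ZG$-module. Here I would use the standard fact that a $\ZG$-lattice $L$ is projective precisely when $L/pL$ is a projective $\bF_p G$-module. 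This encodes the situation in the commutative square
\[
\begin{CD}
\mathbf{K}(\ZG) @>\rho>> \mathbf{K}(\bF_p G)\\
@V\gamma VV @VV\beta V\\
\mathbf{G}(\bQ G) @>\alpha>> \mathbf{G}(\bF_p G)
\end{CD}
\]
where $\rho$ is reduction mod $p$ and $\gamma$ is the generic fibre $L\mapsto[L\otimes_{\Zp}\bQ]$; commutativity is the identity $\beta\circ\rho=\alpha\circ\gamma$, both sides computing the composition factors of a reduction.

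For the easy direction, suppose $\ZG$ is semiperfect. Lifting each primitive idempotent produces, for every indecomposable projective $\bF_p G$-module $P_\phi$, a projective $\ZG$-module reducing to it, so $\rho$ is onto $\mathbf{K}^+(\bF_p G)$. Since the generic fibre of any $\ZG$-lattice is a genuine $\bQ G$-module, $\gamma(\mathbf{K}^+(\ZG))\subseteq\mathbf{G}^+(\bQ G)$, and the square gives
\[
\beta(\mathbf{K}^+(\bF_p G))=\beta(\rho(\mathbf{K}^+(\ZG)))=\alpha(\gamma(\mathbf{K}^+(\ZG)))\subseteq\alpha(\mathbf{G}^+(\bQ G)),
\]
which is the asserted containment.

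For the converse I would assume $\alpha(\mathbf{G}^+(\bQ G))\supseteq\beta(\mathbf{K}^+(\bF_p G))$ and try to build the lifts. For each indecomposable projective $P_\phi$ the containment furnishes a genuine $\bQ G$-module $V_\phi$ with $\alpha([V_\phi])=\beta([P_\phi])$; choosing a full $\ZG$-lattice $M_\phi\subseteq V_\phi$, its reduction $M_\phi/pM_\phi$ then has the same class in $\mathbf{G}(\bF_p G)$ as $P_\phi$, i.e. the same composition factors. The goal is to arrange that this reduction is actually isomorphic to $P_\phi$: once the reduction is projective, $M_\phi$ is a projective $\ZG$-lattice, and doing this for all $\phi$ lifts a complete set of orthogonal idempotents, giving semiperfectness by Proposition~\ref{KRS_facts}(2). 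The main obstacle is exactly this last upgrade, from an equality of Grothendieck classes to an isomorphism of modules together with projectivity of the reduction; the Grothendieck group records only multiplicities and forgets the lattice structure, so a generic choice of $M_\phi$ need not reduce to $P_\phi$. I would attack it by passing to the completion, where $\widehat{\Zp}G$ is semiperfect and supplies a projective lift $\hat P_\phi$ with generic character $\Phi_\phi=\sum_{\chi}d_{\chi\phi}\chi$, and then descend: the hypothesis should be precisely what forces $\Phi_\phi$, after correction by an element of $\ker\alpha$, to be realised by a genuine $\bQ G$-lattice in the same genus as $\hat P_\phi$, permitting replacement of $M_\phi$ by a lattice reducing to $P_\phi$. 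The arithmetic heart of this descent is the control of Schur indices and of the splitting of $p$ in the fields $\bQ(\chi)$ for $\chi\in\Irr(G)$; in the ordinary case this is exactly what Theorem~\ref{Main_Result} isolates, and a general proof would have to extend that analysis across all blocks, which is where I expect the real difficulty to lie.
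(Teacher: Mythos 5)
The statement you are proving is a \emph{conjecture} in the paper: the authors themselves establish only the ``only if'' direction (their Proposition~\ref{clear-direction-of-KGconj}), together with two pieces of supporting evidence --- the conjecture holds when $G = P\rtimes H$ is a semidirect product of its Sylow $p$-subgroup $P$ by a $p'$-group $H$, and a sufficient condition via what they call Brauer--Humphreys filtrations --- so no complete proof of the equivalence exists in the paper either. Your treatment of the ``only if'' direction is correct and is essentially the paper's argument: both proofs run the Cartan--Brauer triangle $\beta\circ\rho=\alpha\circ\gamma$, use semiperfectness to get $\rho$ surjective onto $\mathbf{K}^+(\bF_p G)$ (the paper even gets an isomorphism, citing Curtis--Reiner Th.~18.2), and observe that $\gamma$ carries the positive cone $\mathbf{K}^+(\ZG)$ into $\mathbf{G}^+(\bQ G)$ because a generic fibre is an honest $\bQ G$-module.

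For the converse you offer only a strategy, and you correctly name the obstruction: the hypothesis gives an equality of classes in $\mathbf{G}(\bF_p G)$, i.e.\ equality of composition-factor multiplicities, and upgrading this to an actual isomorphism $M_\phi/pM_\phi\cong P_\phi$ for a well-chosen lattice $M_\phi$ (equivalently, projectivity of the reduction) is precisely the open content of the conjecture. The Grothendieck group forgets the lattice structure, and different lattices in the same $\bQ G$-module can have non-isomorphic, non-projective reductions, so ``choose a full lattice'' cannot work generically; your proposed descent from $\widehat{\Zp}G$ would need a genuine genus-theoretic argument that nobody (including the authors) currently has. It is worth noting how the paper's partial result in this direction proceeds, since it is quite different from your completion-descent idea: under the hypothesis $G=P\rtimes H$, they push the containment down to the quotient $H$ via the commutative square relating $\alpha_G$ and $\alpha_H$, deduce that the decomposition map $\alpha_H$ is \emph{surjective}, and then show by a direct computation with maximal orders that surjectivity forces all Schur indices to be $1$ and $p$ to be inert in the character fields --- i.e.\ the conditions of Theorem~\ref{Main_Result} for $H$ --- whence $\Zp H$ and then $\ZG$ are semiperfect by Proposition~\ref{KRS_facts}(3). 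In short: your easy direction matches the paper; your hard direction is an honest sketch of an open problem, and the concrete gap is the passage from equality in $\mathbf{G}(\bF_p G)$ to an isomorphism of reductions.
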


In the first section we set up preliminaries on liftings of modules, our approach to Problem~\ref{prob2}.
In the second section we consider the ordinary case, i.e., $p\nmid |G|$, in particular, we prove Theorem~\ref{Main_Result}.
The final section is devoted to the general case and contains a discussion of Conjecture~\ref{KGconj}.

\section*{Acknowledgements}
The first author was supported by the Heilbronn Institute for Mathematical Research (HIMR), grant EP/V521917/1. The second author is grateful to the Department of Mathematics of the University of Zurich for its hospitality.

\section{Preliminaries}
Let $\bO$ be a discrete valuation ring, $\bI=(\varpi)$ -- its maximal ideal with uniformiser $\varpi$. We also consider its residue field $\bF \coloneqq \bO/\bI$ and its fraction field $\bK \coloneqq Q(\bO)$.

Let $A$ be an $\bO$-order, that is, an associative $\bO$-algebra, finitely generated projective as an $\bO$-module. Given a ring homomorphism $\phi: \bO \to R$, by $A_\phi$ or $A_R$, when $\phi$ is clear by default, we denote the specialisation $R \otimes_{\bO} A$. 
The two key specialisations come from the two points of $\mbox{Spec}(\bO)$:
the algebras $A_\bK$ and $A_\bF$ are finite-dimensional of the same dimension.

\begin{lemma} \label{Jacobson}
The restriction to the Jacobson radicals $\psi: J(A) \to J(A_\bF)$ of the natural homomorphism $\psi: A \to A_\bF$ is surjective. 
Furthermore, $J(A) = \psi^{-1} (J(A_\bF))$.    
\end{lemma}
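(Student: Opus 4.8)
The plan is to reduce both assertions to the single inclusion $\varpi A \subseteq J(A)$, after which everything follows from the elementary behaviour of the Jacobson radical under quotients. First I would identify the map $\psi$ concretely. Since $A$ is finitely generated (indeed projective) over $\bO$, the natural map $\psi: A \to A_\bF = \bF \otimes_\bO A$ is, by right-exactness of the tensor product, the quotient map $A \to A/\bI A = A/\varpi A$, so that $\ker \psi = \varpi A$. Thus the two claims of the lemma become: (i) $\psi(J(A)) = J(A_\bF)$ (surjectivity) and (ii) $\psi^{-1}(J(A_\bF)) = J(A)$.

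The crux of the argument, and the only place where the order structure genuinely enters, is the inclusion $\varpi A \subseteq J(A)$. I would establish it by showing that $\varpi$ annihilates every simple left $A$-module $M$. Such an $M$ is cyclic over $A$, hence finitely generated over $\bO$ because $A$ is module-finite over $\bO$. Now $\bI = J(\bO)$ is the maximal ideal of the local ring $\bO$, so Nakayama's lemma forces $\bI M \neq M$ whenever $M \neq 0$; since $\bI M = \varpi M$ is itself an $A$-submodule and $M$ is simple, we conclude $\varpi M = 0$. Intersecting the annihilators over all simple $M$ yields $\varpi A \subseteq J(A)$, and in particular $\ker \psi \subseteq J(A)$.

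With this inclusion in hand, I would invoke the standard fact that for any ideal $\mathfrak{a} \subseteq J(A)$ one has $J(A/\mathfrak{a}) = J(A)/\mathfrak{a}$: because $\mathfrak{a}$ is contained in every maximal left ideal, the maximal left ideals of $A/\mathfrak{a}$ correspond bijectively to those of $A$. Applying this with $\mathfrak{a} = \varpi A$ gives $J(A_\bF) = J(A/\varpi A) = J(A)/\varpi A = \psi(J(A))$, which is exactly assertion (i). For assertion (ii), since $\varpi A = \ker \psi \subseteq J(A)$, the preimage under the quotient map $\psi$ of $J(A)/\varpi A$ is precisely $J(A)$, giving $\psi^{-1}(J(A_\bF)) = J(A)$.

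I expect the main (and really the only) obstacle to be the inclusion $\varpi A \subseteq J(A)$; the point demanding care is the observation that simple $A$-modules are finitely generated over $\bO$, which is what lets Nakayama's lemma apply. This is where the finiteness of the order $A$ over the discrete valuation ring $\bO$ is essential, whereas the remaining steps are purely formal manipulations with radicals and quotients.
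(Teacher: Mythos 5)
Your proposal is correct and follows the same strategy as the paper: both proofs turn on showing that every simple $A$-module, being finitely generated over $\bO$ (cyclic over the module-finite algebra $A$), is annihilated by $\varpi$, so that simple $A$-modules and simple $A_\bF$-modules coincide and the two claims reduce to formal facts about radicals. The only cosmetic difference is that you cite Nakayama's lemma and the quotient formula $J(A/\mathfrak{a}) = J(A)/\mathfrak{a}$ for $\mathfrak{a} \subseteq J(A)$, whereas the paper proves the same annihilation statement by hand via Schur's lemma (ruling out that $\varpi$ acts as an automorphism) and then compares annihilators directly.
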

\begin{proof}
Since $\psi$ is surjective, $\psi (J(A)) \subseteq J(A_\bF)$ and $J(A) \subseteq \psi^{-1} (J(A_\bF))$.    

Now consider a simple $A$-module $M$. The multiplication by $\varpi$ is a homomorphism
$M\to M$. By Schur's Lemma, it is either zero or an automorphism. Note further that it cannot be an automorphism. Suppose it is. Then $M=\bK \otimes_{\bO}M$ is already an $A_\bK$-module.
Choose finitely many generators $a_1, \ldots, a_n$ of the $\bO$-module $A$ and a nonzero 
$m\in M$. Let $M' \subseteq M$ be the $\bO$-submodule generated by $a_1m, \ldots, a_nm$. Clearly, $M'$ is an $A$-submodule. By simplicity, $M'=M$, which is impossible for a $\bK$-vector space.

Hence, $\varpi M =0$ and $M$ is an $A_\bF$-module. It follows that 
$J(A)$ and $J(A_\bF)$ consist of the annihilators of the same set of modules. 
Consequently, $\psi: J(A) \to J(A_\bF)$ is surjective and 
$J(A) = \psi^{-1} (J(A_\bF))$.  
\end{proof}

\begin{cor} \label{cor1}
The rings $A/J(A)$ and $A_\bF/J(A_\bF)$ are isomorphic.    
\end{cor}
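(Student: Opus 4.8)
The plan is to read off the corollary directly from Lemma~\ref{Jacobson} via the first isomorphism theorem for rings, so almost all of the work has already been done. First I would consider the composite ring homomorphism
\[
\Phi\colon A \xrightarrow{\psi} A_\bF \xrightarrow{\pi} A_\bF/J(A_\bF),
\]
where $\pi$ is the canonical projection. Since $\psi$ is surjective (it is the natural specialisation map, and surjectivity was already used in the proof of the lemma) and $\pi$ is surjective, the composite $\Phi$ is surjective.

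Next I would identify the kernel of $\Phi$. By definition, $\ker(\Phi) = \{a \in A : \psi(a) \in J(A_\bF)\} = \psi^{-1}(J(A_\bF))$. This is precisely the content of the second assertion of Lemma~\ref{Jacobson}, which gives $J(A) = \psi^{-1}(J(A_\bF))$. Hence $\ker(\Phi) = J(A)$.

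Finally, applying the first isomorphism theorem to the surjection $\Phi$ with kernel $J(A)$ yields a ring isomorphism
\[
A/J(A) \;\xrightarrow{\ \sim\ }\; A_\bF/J(A_\bF),
\]
which is exactly the assertion. I do not anticipate any genuine obstacle here: the only nontrivial input is the equality $J(A) = \psi^{-1}(J(A_\bF))$ supplied by the lemma, and everything else is a formal application of the isomorphism theorem. The one point worth stating explicitly for completeness is that $\Phi$ is a homomorphism of rings (not merely of $\bO$-modules), so that the induced map respects the multiplicative structure; this is automatic since both $\psi$ and $\pi$ are ring maps.
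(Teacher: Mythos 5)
Your proof is correct and is exactly the intended derivation: the paper states the corollary without proof as an immediate consequence of Lemma~\ref{Jacobson}, and the implicit argument is precisely yours --- the surjection $\pi\circ\psi\colon A \to A_\bF/J(A_\bF)$ has kernel $\psi^{-1}(J(A_\bF)) = J(A)$, so the first isomorphism theorem gives $A/J(A)\cong A_\bF/J(A_\bF)$. Nothing is missing; note only that of the lemma's two assertions you need just the equality $J(A)=\psi^{-1}(J(A_\bF))$ together with surjectivity of $\psi$, since the surjectivity of $\psi|_{J(A)}$ then follows automatically.
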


We need to talk about lifting of modules. Let $M_\bF$ be an $A_\bF$-module. We say that an $A$-module $M$ is {\em its lifting} if $M$ is a projective $\bO$-module 
and $M_\bF \cong \bF \otimes_\bO M$ as $A_\bF$-modules. The following observation is useful:
\begin{lemma} \label{p_lift}
\cite[Th. 30.11]{curtis1981methodsVol1}   
Suppose $M$ is a lifting of $M_\bF$.
The $A$-module $M$ is projective if and only if the $A_\bF$-module $M_\bF$ is projective.
\end{lemma}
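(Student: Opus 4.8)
The plan is to prove the two implications separately; the forward direction is formal, and the reverse direction is the substance.

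\emph{Easy direction} ($M$ projective over $A$ $\Rightarrow$ $M_\bF$ projective over $A_\bF$). Note first that $M$ is finitely generated over $A$, since as a lifting it is finitely generated over $\bO$. If $M$ is a direct summand of a free module $A^n$, then applying $\bF\otimes_\bO-$ exhibits $M_\bF=\bF\otimes_\bO M$ as a direct summand of $\bF\otimes_\bO A^n=A_\bF^{\,n}$, so $M_\bF$ is $A_\bF$-projective.

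\emph{Reverse direction: reduction to an idempotent.} Since $\varpi\cdot 1_A$ maps to $0\in J(A_\bF)$, Lemma~\ref{Jacobson} gives $\bI A\subseteq J(A)$. Lifting a minimal generating set of $M_\bF$ to $M$ therefore produces, by Nakayama, a surjection $\pi\colon F=A^k\to M$; put $N=\ker\pi$. Because $M$ is $\bO$-free the sequence $0\to N\to F\xrightarrow{\pi} M\to 0$ is $\bO$-split, hence stays exact after $\bF\otimes_\bO-$, yielding $0\to N_\bF\to F_\bF\xrightarrow{\bar\pi}M_\bF\to 0$; and since $M_\bF$ is $A_\bF$-projective this reduced sequence splits. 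Writing $E=\mathrm{End}_A(F)$, an $\bO$-order with $E/\bI E\cong\mathrm{End}_{A_\bF}(F_\bF)=:E_\bF$ (freeness of $F$ makes this base change an isomorphism), the splitting is recorded by an idempotent $\bar e\in E_\bF$ with $\ker\bar\pi=(1-\bar e)F_\bF$ and $\bar\pi|_{\bar e F_\bF}\colon \bar eF_\bF\xrightarrow{\sim}M_\bF$. If $\bar e$ lifts to an idempotent $e\in E$, then $eF$ is a direct summand of $F$, hence $A$-projective, and $\pi$ restricts to a map $eF\to M$ that is an isomorphism modulo $\bI$; as $eF$ and $M$ are $\bO$-free of the same rank, comparing ranks shows this map is an isomorphism, so $M$ is projective.

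\emph{The main obstacle and its resolution.} Everything thus reduces to lifting $\bar e$ across $E\twoheadrightarrow E/\bI E$ with $\bI E\subseteq J(E)$. This is precisely the step that \emph{fails} over a non-complete $\bO$ — the same phenomenon that makes $\ZG$ fail to be semiperfect — and it is the crux. To get around it I would pass to the $\bI$-adic completion $\widehat{\bO}$, a complete discrete valuation ring with the same residue field $\bF$, and set $\widehat A=\widehat{\bO}\otimes_\bO A$, $\widehat M=\widehat{\bO}\otimes_\bO M$. Faithful flatness of $\widehat{\bO}$ over $\bO$ lets one descend projectivity: $\mathrm{id}_M$ lies in the image of $\pi_*\colon \Hom_A(M,F)\to\Hom_A(M,M)$ iff its class in the finitely generated $\bO$-module $\coker\pi_*$ vanishes, and by faithful flatness (together with flat base change of $\Hom$ for the finitely presented module $M$) this class vanishes iff it vanishes after $-\otimes_\bO\widehat{\bO}$, i.e. iff $\widehat\pi$ splits. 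Thus $M$ is $A$-projective iff $\widehat M$ is $\widehat A$-projective. Now apply the construction of the previous paragraph over $\widehat{\bO}$: there the endomorphism order $\widehat E$ is $\bI$-adically complete, so $\bar e$ lifts by the usual idempotent-refinement (Hensel) argument, and since $\widehat A_\bF=A_\bF$ and $\bF\otimes_{\widehat{\bO}}\widehat M=M_\bF$ we conclude that $\widehat M$ is $\widehat A$-projective exactly when $M_\bF$ is $A_\bF$-projective. Chaining the three equivalences completes the proof.
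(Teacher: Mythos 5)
Your proof is correct, but there is nothing in the paper to compare it against line by line: the paper does not prove this lemma, it simply quotes it from Curtis--Reiner \cite[Th.~30.11]{curtis1981methodsVol1}. What you have written is therefore a self-contained substitute for that citation, and your strategy --- reduce to splitting a surjection $\pi\colon A^k\twoheadrightarrow M$ obtained by Nakayama (using $\varpi A\subseteq J(A)$, which indeed follows from Lemma~\ref{Jacobson}), record the splitting of the reduced sequence as an idempotent $\bar e\in \mathrm{End}_{A_\bF}(A_\bF^k)$, lift $\bar e$ after passing to the completion $\widehat{\bO}$ where the endomorphism order is $\varpi$-adically complete, and then descend projectivity along the faithfully flat map $\bO\to\widehat{\bO}$ via flat base change of $\Hom$ for the finitely presented module $M$ --- is essentially the standard argument behind the cited theorem (reduction to the complete case plus descent), so in spirit this is the ``same'' proof, written out in full. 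Two points deserve tightening. First, your opening claim that a lifting is automatically finitely generated over $\bO$ silently uses that $M_\bF$ is finite-dimensional (over a DVR, $\bO$-projective means $\bO$-free, and $\bF\otimes_\bO \bO^{(I)}\cong \bF^{(I)}$); this is the paper's standing convention, but it should be said. Second, in the step where $\pi$ restricts to $eF\to M$, ``comparing ranks'' alone does not give an isomorphism: you first need surjectivity, which follows from Nakayama because the map is an isomorphism modulo $\varpi$, and only then does the rank count kill the kernel (which splits off as an $\bO$-summand since $M$ is $\bO$-free). Beyond filling a citation, your write-up has a genuine merit in the context of this paper: it isolates exactly where completeness of the coefficient ring is needed (lifting an idempotent in $\mathrm{End}_A(A^k)$) and explains why the lemma nonetheless survives over the non-complete ring $\Zp$ --- a pointed observation here, since the failure of idempotent lifting over $\ZG$ itself is precisely what the paper is about.
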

We are ready for our main criterion, a handy supplement to Proposition~\ref{KRS_cond_gen}.

\begin{prop} \label{KRS_cond}
The following conditions for the $\bO$-order $A$ are equivalent.
\begin{enumerate}
    \item $A$ is  semiperfect.
    \item Every idempotent of $A_\bF$ can be lifted to an idempotent of $A$.
    \item Every principal indecomposable $A_\bF$-module admits a 
    lifting.
    \item Every finitely generated projective $A_\bF$-module admits a 
    lifting.
\end{enumerate}    
\end{prop}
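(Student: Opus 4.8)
The plan is to establish the cycle of implications (1) $\Rightarrow$ (2) $\Rightarrow$ (3) $\Rightarrow$ (4) $\Rightarrow$ (1). First I would record the ambient facts that make everything run. The specialisation $A_\bF$ is a finite-dimensional $\bF$-algebra, hence Artinian and therefore semiperfect; in particular its radical $J(A_\bF)$ is nilpotent, idempotents lift along $A_\bF \to A_\bF/J(A_\bF)$, and two idempotents of $A_\bF$ with the same image in $A_\bF/J(A_\bF)$ are conjugate by a unit. By Corollary~\ref{cor1} the ring $A/J(A) \cong A_\bF/J(A_\bF)$ is semisimple Artinian, so the last criterion of Proposition~\ref{KRS_cond_gen} shows that (1) is equivalent to the statement that every idempotent of $A/J(A)$ lifts to $A$. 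Finally, since $\psi\colon A \to A_\bF$ has kernel $\varpi A$ with $\psi(\varpi A)=0 \subseteq J(A_\bF)$, Lemma~\ref{Jacobson} gives $\varpi A \subseteq J(A)$; hence an element of $A$ is a unit as soon as its image in $A_\bF$ is a unit, and in particular units of $A_\bF$ lift to units of $A$.

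For (1) $\Rightarrow$ (2), take an idempotent $f \in A_\bF$. Its image $\bar f$ in $A_\bF/J(A_\bF) \cong A/J(A)$ is idempotent, and by semiperfectness it lifts to an idempotent $e \in A$. Now $\psi(e)$ and $f$ are idempotents of $A_\bF$ with the same image $\bar f$ modulo $J(A_\bF)$, so they are conjugate: $f = u\,\psi(e)\,u^{-1}$ for some unit $u \in A_\bF$. Lifting $u$ to a unit $\tilde u \in A$ as above, the element $\tilde u e \tilde u^{-1}$ is an idempotent of $A$ whose image under $\psi$ is $u\,\psi(e)\,u^{-1}=f$. This passage from an idempotent modulo $J(A)$ to a genuine lift of $f \in A_\bF$ --- the conjugation step --- is the part that really uses the structure at hand, and I expect it to be the main obstacle; the reverse implication (2) $\Rightarrow$ (1) is by contrast immediate, since one may lift any idempotent of $A/J(A)=A_\bF/J(A_\bF)$ first to $A_\bF$ and then, by (2), to $A$.

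The remaining arrows are module-theoretic. For (2) $\Rightarrow$ (3), a principal indecomposable $A_\bF$-module has the form $A_\bF e$ for a primitive idempotent $e \in A_\bF$; lifting $e$ to an idempotent $\tilde e \in A$ by (2), the summand $A\tilde e$ of the regular module is projective over $A$, projective over $\bO$ (being an $\bO$-module summand of $A$), and satisfies $\bF \otimes_\bO A\tilde e \cong A_\bF e$, so it is a lifting. For (3) $\Rightarrow$ (4) I would use Krull--Schmidt in the Artinian algebra $A_\bF$ to write any finitely generated projective $A_\bF$-module as a finite direct sum of principal indecomposables, lift each summand by (3), and take the direct sum of the lifts, which is again $\bO$-projective and reduces correctly; Lemma~\ref{p_lift} confirms the result is $A$-projective. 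Finally, for (4) $\Rightarrow$ (1) I would verify the projective-cover criterion of Proposition~\ref{KRS_cond_gen}: by Lemma~\ref{Jacobson} the simple $A$-modules are exactly the simple $A_\bF$-modules, and given such a simple $S$ its $A_\bF$-projective cover $P_S$ (a principal indecomposable) lifts to a projective $A$-module $\tilde P$ with, using $\psi(J(A))=J(A_\bF)$, top $\tilde P/J(A)\tilde P \cong P_S/J(A_\bF)P_S \cong S$; thus $\tilde P \twoheadrightarrow S$ is a projective cover, and every simple $A$-module has one, so $A$ is semiperfect.
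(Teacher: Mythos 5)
Your proof is correct, but it routes the equivalences differently from the paper. The paper proves $(1)\Leftrightarrow(2)$ by comparing idempotents along $A \xrightarrow{\psi} A_\bF \xrightarrow{\pi} A/J(A)\cong A_\bF/J(A_\bF)$, proves $(1)\Rightarrow(4)$ by taking an $A$-projective cover of $P_\bF$ (available from Proposition~\ref{KRS_cond_gen}, since $P_\bF$ is in particular a finitely generated $A$-module) and showing its kernel is exactly $\varpi P$, gets $(4)\Rightarrow(3)$ for free, and closes with $(3)\Rightarrow(1)$ by the projective-cover-of-simples argument. You instead run the cycle $(1)\Rightarrow(2)\Rightarrow(3)\Rightarrow(4)\Rightarrow(1)$: liftings of projectives come not from projective covers over $A$ but from lifted idempotents --- $A\tilde e$ is an explicit lifting of $A_\bF e$ --- together with Krull--Schmidt over the Artinian algebra $A_\bF$ to pass from principal indecomposables to all finitely generated projectives. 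Your route is more explicit (liftings are realized as direct summands of ${}_AA$, which is exactly the mechanism the paper deploys later in the maximal-order setting of Proposition~\ref{KRS_cond_2}), and your $(1)\Rightarrow(2)$ is in fact more complete than the paper's own: the paper lifts $\pi(e)$ to an idempotent of $A$ and stops, whereas one still needs your conjugation step (idempotents of $A_\bF$ that agree modulo $J(A_\bF)$ are conjugate by a unit, and units of $A_\bF$ lift to units of $A$ because $\varpi A\subseteq J(A)$) to obtain a lift of $e$ itself rather than merely of its image modulo the radical. The price of your route is the appeal to these two standard ring-theoretic facts, and two small points deserve an explicit word in a final write-up: in $(4)\Rightarrow(1)$ the kernel $J(A)\tilde P$ of $\tilde P\twoheadrightarrow S$ is superfluous by Nakayama's lemma, which applies because a lifting of a finite-dimensional $A_\bF$-module is automatically $\bO$-free of finite rank and hence finitely generated over $A$; and the fact that simple $A$-modules are precisely the simple $A_\bF$-modules is established inside the proof of Lemma~\ref{Jacobson} (and via Corollary~\ref{cor1}) rather than in its statement.
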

\begin{proof}
The equivalence $(1)\Leftrightarrow (2)$ follows from 
Corollary~\ref{cor1}. Indeed, it ensures that $A/J(A)$ is a finite-dimensional semisimple algebra. 
By Proposition~\ref{KRS_cond_gen}, $A$ is semiperfect if and only if all idempotents of $A/J(A)\cong A_\bF/J(A_\bF)$ can be lifted to $A$. 
It remains to compare idempotents in three algebras
\[ A \xrightarrow{\psi} A_\bF \xrightarrow{\pi} A/J(A)\cong A_\bF/J(A_\bF)\, .\]
Indeed, an idempotent $e\in A/J(A)$ can always be lifted
to an idempotent $e_1\in A_\bF$ because $J(A_\bF)$ is nilpotent. Now use $(2)$ to lift it to an idempotent $e_2\in A$. This proves $(1)$.

In the opposite direction, assume $(1)$. 
Pick an idempotent $e\in A_\bF$, lift $\pi(e)$ to an
idempotent $e_1\in A$. This proves $(2)$.

The implication
$(1)\Rightarrow (4)$ is contained within  Proposition~\ref{KRS_cond_gen}.
Indeed, let $P_\bF$ be a finitely generated projective $A_\bF$-module.
As an $A$-module, it is also finitely generated. Proposition~\ref{KRS_cond_gen}
yields us an $A$-projective cover $\pi : P \twoheadrightarrow  P_\bF$.
Note that $A$-projectivity implies $\bO$-projectivity.

Since $\varpi P_\bF =0$, we conclude that $\ker \pi \supseteq \varpi P$.
The induced $A$-module homomorphism
$\pi'$ yields a decomposition
\[\pi: P \xrightarrow{\psi} P/\varpi P \xrightarrow{\pi'} P_\bF\, .\] 
Since $P/\varpi P$ is an $A_\bF$-module,
projectivity of $P_\bF$ implies that $\pi'$ is split:
$P/\varpi P = M \oplus P_\bF$ for another submodule $M$. Hence,
\[
P = \psi^{-1}(M) + \psi^{-1} (P_\bF) \subseteq \ker (\pi) + \psi^{-1} (P_\bF) \, .
\]
Since $\ker (\pi)$ is superfluous,  $P = \psi^{-1} (P_\bF)$ and $M=0$.
Therefore, $\ker \pi = \varpi P$
and $P\otimes_{\bO} \bF \cong P_\bF$, so that $P$ is a 
lifting.

To prove $(3)\Rightarrow (1)$, we construct a projective cover of a simple $A$-module $M$.
By Corollary~\ref{cor1}, $M$ is an $A_\bF$-module. Let $P_\bF (M)$ be its $A_\bF$-projective cover. Since $P_\bF (M)$ is a principal indecomposable, it admits a 
lifting $P$.
We claim that $P$ is an $A$-projective cover of $M$. Indeed, it is projective by Lemma~\ref{p_lift} and we have surjective homomorphisms of $A_\bF$ modules
\[
\pi: P \xrightarrow{\psi} P_{\bF} (M) \xrightarrow{\pi'} M \, .
\]

It remains to argue that $\ker (\pi)$ is superfluous. Assume $L+\ker (\pi)=P$.
It follows that $\psi(L)+\ker (\pi')=P_\bF (M)$. Since $\pi'$ is an $A_\bF$-projective cover,
$\ker (\pi')$ is superfluous and $\psi(L) =P_\bF (M)$.
It follows that $L+ \ker (\psi) = L+ \varpi P = P$.
Observe that $\psi$ is an $\bO$-module projective cover.
Hence, $\ker (\psi)$ is $\bO$-superfluous and $L =P$.

The final implication
$(4)\Rightarrow (3)$ is obvious.
\end{proof}

The final well-known corollary is immediate since completeness of $\bO$ enables lifting of idempotents.
\begin{cor}
If $\bO$ is complete, $A$ is semiperfect.    
\end{cor}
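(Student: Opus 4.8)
The plan is to verify condition (2) of Proposition~\ref{KRS_cond}, namely that every idempotent of $A_\bF = A/\varpi A$ lifts to an idempotent of $A$; once this is done, semiperfectness of $A$ follows at once.

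First I would record that $A$ is $\varpi$-adically complete: being an $\bO$-order, $A$ is finitely generated as a module over the complete discrete valuation ring $\bO$, so the canonical map $A \to \varprojlim_n A/\varpi^n A$ is an isomorphism. The reduction $\psi : A \to A_\bF$ is reduction modulo $\varpi A$, and $\varpi A = \ker(\psi) \subseteq J(A)$ by Lemma~\ref{Jacobson}.

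The core of the argument is a Hensel-type successive approximation for idempotents. Starting from an idempotent $\bar e \in A/\varpi A$, I would construct inductively a compatible family of idempotents $e_n \in A/\varpi^n A$ with $e_1 = \bar e$. At the inductive step the projection $A/\varpi^{n+1}A \twoheadrightarrow A/\varpi^n A$ has kernel $\varpi^n A/\varpi^{n+1}A$, which is square-zero. Choosing any lift $a \in A/\varpi^{n+1}A$ of $e_n$, so that $u := a^2 - a$ lies in this square-zero ideal, the element $e_{n+1} := 3a^2 - 2a^3$ is a genuine idempotent of $A/\varpi^{n+1}A$ reducing to $e_n$: since $u$ is a polynomial in $a$ it commutes with $a$, and a short computation using $u^2 = 0$ gives $e_{n+1} = a + u - 2au$ and $e_{n+1}^2 = e_{n+1}$. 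Because $e_{n+1} \equiv e_n \pmod{\varpi^n}$, the family is coherent and its limit $e \in A = \varprojlim_n A/\varpi^n A$ is an idempotent with $\psi(e) = \bar e$.

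The only delicate points are the idempotent-lifting formula modulo a square-zero ideal and the passage to the inverse limit; both are classical, and it is precisely completeness of $\bO$ (hence of $A$) that makes the limiting step legitimate. Having produced the lift, condition (2) of Proposition~\ref{KRS_cond} holds, so $A$ is semiperfect.
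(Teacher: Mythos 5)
Your proof is correct and follows essentially the same route as the paper: both verify condition (2) of Proposition~\ref{KRS_cond}, i.e.\ that completeness of $\bO$ allows every idempotent of $A_\bF$ to be lifted to $A$. The paper simply cites this idempotent-lifting fact as well known, whereas you supply the classical details — $\varpi$-adic completeness of the order, the square-zero lifting formula $3a^2-2a^3$, and the passage to the inverse limit — all of which check out.
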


\section{The semisimple case}
Before specialising to the case of the group algebra, we make several general observations in the case, when $A_\bF$ is semisimple. 
The following observation is not fully routine because semisimplicity (in contrast to separability) is not an open condition:
\begin{lemma} \label{semisimple}
If $A_\bF$ is semisimple, then $A_\bK$ is semisimple.    
\end{lemma}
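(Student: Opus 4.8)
The plan is to prove $J(A_\bK)=0$, which for a finite-dimensional $\bK$-algebra is equivalent to semisimplicity. Since $A$ is projective, hence torsion-free, over $\bO$, it embeds into $A_\bK = \bK\otimes_\bO A$, so I will freely regard $A$ as a subring of $A_\bK$. The strategy is to transport the vanishing of $J(A_\bF)$ first to $J(A)$ using Lemma~\ref{Jacobson}, and then to push it down to $J(A_\bK)$ by a Nakayama argument.

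First I would record that $A_\bF$ semisimple means $J(A_\bF)=0$, so Lemma~\ref{Jacobson} gives $J(A)=\psi^{-1}(J(A_\bF))=\ker\psi=\varpi A$, where $\psi\colon A\to A_\bF$ is reduction modulo $\varpi$. Next I set $N\coloneqq A\cap J(A_\bK)$, a two-sided ideal of $A$. Because $A_\bK$ is finite-dimensional over the field $\bK$, its radical $J(A_\bK)$ is nilpotent, say $J(A_\bK)^r=0$; then $N^r\subseteq J(A_\bK)^r=0$, so $N$ is a nilpotent ideal, and every nil ideal sits inside the Jacobson radical, giving $N\subseteq J(A)=\varpi A$.

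The crucial step will be to deduce $N=\varpi N$. Given $n\in N\subseteq\varpi A$, I write $n=\varpi a$ with $a\in A$; since $J(A_\bK)$ is a $\bK$-subspace, $a=\varpi^{-1}n\in J(A_\bK)\cap A=N$, which proves $N\subseteq\varpi N$ (the reverse inclusion being clear). Now $N$ is a finitely generated module over the Noetherian local ring $\bO$, being a submodule of the finitely generated $\bO$-module $A$, and $\varpi\bO=\bI$ is the maximal ideal, so Nakayama's lemma forces $N=0$. Finally, every element of $A_\bK$ has the form $\varpi^{-m}a$ with $a\in A$; if such an element lies in $J(A_\bK)$, then $a=\varpi^{m}\cdot(\varpi^{-m}a)\in J(A_\bK)\cap A=N=0$, whence $J(A_\bK)=0$ and $A_\bK$ is semisimple.

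The main obstacle, and the reason the statement is not fully routine as the authors warn, is precisely that semisimplicity does not spread out over $\mathrm{Spec}(\bO)$: one cannot infer it on the generic fibre from the special fibre by any openness or specialisation principle. The real content lies in the Nakayama step, which exploits the relation $N\subseteq\varpi A$ coming from $J(A)=\varpi A$ together with the $\bK$-linearity of $J(A_\bK)$ to manufacture the self-similarity $N=\varpi N$ and thereby kill $N$.
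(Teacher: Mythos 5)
Your proof is correct and follows essentially the same route as the paper: both intersect $J(A_\bK)$ with the lattice $A$, show the resulting ideal lies in $\varpi A$, and then exploit its stability under division by $\varpi$ to force it to vanish. The only cosmetic differences are that you close with Nakayama's lemma where the paper runs a minimal-valuation contradiction, and you obtain the containment in $\varpi A$ via Lemma~\ref{Jacobson} rather than the paper's direct observation that the image of a nilpotent ideal in the semisimple algebra $A_\bF$ must be zero.
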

\begin{proof}
Suppose $A_\bK$ is not semisimple. Then $I\coloneqq A \cap J(A_\bK)$ is a non-zero nilpotent ideal of $A$. Since $A_\bF$ is semisimple, it has no nilpotent ideals. Therefore, $\psi(I)=0$
and $I\subseteq \varpi A = \ker \psi$, where $\psi: A\to A_\bF$ is the reduction homomorphism.

Consider now $I' = \varpi^{-1} I \coloneqq \{ x\in A \,\mid\, \varpi x \in I \}$.
Clearly, $I'$ is still a subset of $J(A_\bK)$. Hence, $I'=I$.
Now pick a nonzero element $\varpi^n a\in I$ with $a\in A\setminus \varpi A$ and the smallest possible $n$. Since $I\subseteq \varpi A$, $n\geq 1$. Then 
$\varpi^{n-1} a\in I'$, proving that $I'\neq I$, a contradiction. 
\end{proof}
In light of Lemma~\ref{semisimple}, the next step is to compare Artin-Wedderburn decompositions. Let us fix them:
\begin{equation} \label{ArWe}
A_\bF \cong M_{n_1}(F_1) \oplus \ldots \oplus M_{n_k}(F_k)  , \quad  
A_\bK \cong M_{m_1}(K_1) \oplus \ldots \oplus M_{m_t}(K_t)
\end{equation}
where $F_i$, $K_i$ are finite-dimensional division algebras over $\bF$ and $\bK$, respectively. By $\iota (F_i) \coloneqq \sqrt{\dim_{Z(F_i)} F_i}$,
$\iota (K_i) \coloneqq \sqrt{\dim_{Z(K_i)} K_i}$ we denote their Schur indices.


\begin{prop} \label{KRS_cond_2}
Suppose that $A_\bF$ is semisimple with Artin-Wedderburn decompositions as in~\eqref{ArWe}. In addition to Proposition~\ref{KRS_cond}, the following conditions for the $\bO$-order $A$ are equivalent.
\begin{enumerate}
\newcounter{PPP}
    \item $A$ is semiperfect.
    \item Every simple $A_\bF$-module admits a lifting.
\setcounter{PPP}{\theenumi}
\end{enumerate} 
Let us assume further that $A$ is a maximal order. Then the above conditions are also equivalent to the following two conditions.
\begin{enumerate}
\setcounter{enumi}{\thePPP}
    \item The numbers of simple modules are equal: $k=t$. Furthermore, after a suitable permutation every simple $A_\bK$-module $K_i^{m_i}$ admits an $A$-lattice $M_i$ such that the $A_\bF$-modules $M_i\otimes_\bO \bF$ and $F_i^{n_i}$ are isomorphic. 
    \item The numbers of simple modules are equal: $k=t$. Furthermore, after a suitable permutation for every $i$ we have $n_i=m_i$, $\iota (F_i) = \iota (K_i)$ and every division algebra $K_i$ admits an $\bO$-order $L_i$ such that the $\bF$-algebras $L_i\otimes_\bO \bF$ and $F_i$ are isomorphic. 
\end{enumerate} 
\end{prop}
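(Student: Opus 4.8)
The plan is to treat the two equivalences $(5)\Leftrightarrow(6)$ and, under the maximality hypothesis, $(6)\Leftrightarrow(7)\Leftrightarrow(8)$ separately, reducing everything to Proposition~\ref{KRS_cond} and to a block-by-block analysis via maximal-order theory. For the first equivalence I would argue that, since $A_\bF$ is semisimple, every simple $A_\bF$-module is projective and is its own projective cover; hence the principal indecomposable $A_\bF$-modules are exactly the simple ones, and condition (6) is verbatim condition (3) of Proposition~\ref{KRS_cond}. As (5) is condition (1) there, $(5)\Leftrightarrow(6)$ is immediate.

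For the maximal case I would first set up the block decomposition. By Lemma~\ref{semisimple}, $A_\bK$ is semisimple, so~\eqref{ArWe} is available, and I would invoke the structure theory of maximal orders to split $A=\bigoplus_{i=1}^{t}A_i$ along the central idempotents of $A_\bK$, with $A_i$ a maximal order in $M_{m_i}(K_i)$ that is Morita equivalent to a maximal order $\Delta_i$ in the division algebra $K_i$. This Morita equivalence is compatible with reduction modulo $\varpi$, since $A_{i,\bF}$ is Morita equivalent to $\bar\Delta_i:=\Delta_i\otimes_\bO\bF$ (concretely $A_i\cong M_{m_i}(\Delta_i)$ and $A_{i,\bF}\cong M_{m_i}(\bar\Delta_i)$ whenever the relevant lattices are free). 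Every lifting question in block $i$ thereby reduces to lifting $\bar\Delta_i$-modules to $\Delta_i$-lattices.

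The heart of the argument is the dichotomy: the unique simple $A_{i,\bF}$-module lifts if and only if $\bar\Delta_i$ is a division ring. I would prove this by an $\bO$-rank count. Any $\Delta_i$-lattice $L$ becomes a module over the division algebra $K_i$ after $\otimes_\bO\bK$, so $\rank_\bO L=\dim_\bK(L\otimes_\bO\bK)$ is a multiple of $\dim_\bK K_i=\dim_\bF\bar\Delta_i$; on the other hand a simple $\bar\Delta_i$-module is a summand of ${}_{\bar\Delta_i}\bar\Delta_i$, so its $\bF$-dimension is at most $\dim_\bF\bar\Delta_i$, with equality exactly when $\bar\Delta_i$ has a single simple module of multiplicity one, i.e. when $\bar\Delta_i$ is a division ring; in that case $L=\Delta_i$ is a lift. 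Feeding (6) through this dichotomy forces every $\bar\Delta_i$ to be a division ring, whence each $A_{i,\bF}$ is simple, so $k=t$, the blocks match after permutation, $n_i=m_i$, and $F_i\cong\bar\Delta_i$. Taking $L_i=\Delta_i$ then gives an order in $K_i$ with $L_i\otimes_\bO\bF\cong F_i$, and $\iota(F_i)=\iota(K_i)$ would follow from $\dim_\bF F_i=\dim_\bK K_i$ once one checks that $Z(\bar\Delta_i)$ is the (inert, unramified) reduction of $Z(\Delta_i)=\mathcal{O}_{Z(K_i)}$, so that $[Z(F_i):\bF]=[Z(K_i):\bK]$. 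This yields (8). Conversely, given (8), I would embed $L_i$ in a maximal order $\Delta_i$ of $K_i$, use the index equality to force $\bar\Delta_i\cong F_i$, and take the Morita image of $\Delta_i$ as the lattice $M_i$ in $V_i=K_i^{m_i}$ with $M_i\otimes_\bO\bF\cong F_i^{n_i}$, giving (7); and $(7)\Rightarrow(6)$ is immediate since, with $k=t$, the $M_i$ are liftings of all the simple $A_\bF$-modules.

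The main obstacle is the dichotomy of the previous paragraph together with the Schur-index bookkeeping, both of which rest on maximal-order theory over a \emph{non-complete} DVR. It is precisely the non-completeness that makes $\bar\Delta_i$ interesting: $\Delta_i$ need not be local, and $\bar\Delta_i$ can be a genuinely nontrivial semisimple algebra (a field extension, a product of fields, or a matrix ring). Pinning down exactly when this reduction degenerates to a division ring, and verifying that the Schur index and central degree are preserved under such a reduction, is the delicate point; specialised to $\bO=\bZ_{(p)}$ it is exactly this phenomenon that becomes the condition that $p$ be inert in the field of character values.
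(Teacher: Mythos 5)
Your proposal is sound in overall architecture and, for the substantive implications, takes a genuinely different route from the paper. (Your (5)--(8) are the paper's (1)--(4); I use the paper's numbering below.) The equivalence $(1)\Leftrightarrow(2)$ is handled identically in both. After that, the paper runs the cycle $(1)\Rightarrow(3)\Rightarrow(4)\Rightarrow(2)$: it proves $(1)\Rightarrow(3)$ by lifting a minimal idempotent $e\in A_\bF$ to a minimal idempotent $\tilde e\in A$ and citing \cite[Th. 26.12]{curtis1981methodsVol1} to see that $A_\bK\tilde e$ is simple, and it proves $(3)\Rightarrow(4)$ by setting $L_i=\mathrm{End}_A(M_i)$ and showing this is a full lattice in $K_i$ whose reduction is $F_i$. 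You instead run $(2)\Rightarrow(4)\Rightarrow(3)\Rightarrow(2)$, with all the work concentrated in the rank-count dichotomy inside each Morita block $A_i\cong M_{m_i}(\Delta_i)$: a simple $\overline{\Delta}_i$-module admits a $\Delta_i$-lattice lifting iff $\overline{\Delta}_i$ is a division ring, since the $\bO$-rank of any $\Delta_i$-lattice is a positive multiple of $\dim_\bK K_i=\dim_\bF\overline{\Delta}_i$, while a simple module over the semisimple algebra $\overline{\Delta}_i$ has dimension at most $\dim_\bF\overline{\Delta}_i$, with equality exactly in the division case. This argument is correct (do record the small Nakayama observation that a lifting of a simple module lying in block $i$ is automatically an $A_i$-lattice, i.e.\ is killed by the other central idempotents), and it buys you something the paper leaves implicit: in your version each block contributes exactly one simple $A_\bF$-module, so $k=t$ and the matching of blocks come for free, whereas the paper's $(1)\Rightarrow(3)$ produces a lattice in \emph{some} simple $A_\bK$-module for each $F_i^{n_i}$ and does not address why this assignment is a bijection. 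The price is that you invoke the structure theory of maximal orders (\cite[Th. 26.23]{curtis1981methodsVol1} plus conjugacy of maximal orders over a DVR) for every implication, where the paper needs it only for $(4)\Rightarrow(2)$. (Also, in your $(4)\Rightarrow(3)$ step the mechanism forcing $\overline{\Delta}_i\cong F_i$ is not the index equality but the fact that $L_i\otimes_\bO\bF\to\overline{\Delta}_i$ is a ring map out of a division ring, hence injective, between spaces of equal dimension $\dim_\bK K_i$.)

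The one step you have not closed is the Schur-index equality $\iota(F_i)=\iota(K_i)$ in (4), which you defer to ``checking that $Z(\overline{\Delta}_i)$ is the reduction of $Z(\Delta_i)$''. Be aware that the paper does not prove this either: it declares $n_i=m_i$ and $\iota(K_i)=\iota(F_i)$ to follow ``immediately'', but what is immediate from either construction is only $n_i=m_i$ and $\dim_\bF F_i=\dim_\bK K_i$; the index equality is equivalent to the extra statement $[Z(F_i):\bF]=[Z(K_i):\bK]$. Half of this is cheap and you should include it: $Z(\Delta_i)=\sO_{Z(K_i)}$ is an $\bO$-pure submodule of $\Delta_i$ (the quotient is torsion-free), so its reduction embeds as a central subring of the division ring $F_i$, hence is a field; this shows $\varpi$ is inert in $Z(K_i)$ --- exactly the phenomenon you predict --- and gives $[Z(F_i):\bF]\ge[Z(K_i):\bK]$, i.e.\ $\iota(F_i)\le\iota(K_i)$. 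The reverse inequality genuinely needs completion: $\Delta_i\otimes_\bO\widehat{\bO}$ is a maximal order in $K_i\otimes_\bK\widehat{\bK}$, which by inertness is a central simple algebra $M_r(D)$ over the complete field $\widehat{Z(K_i)}$, so the completed order is $M_r(\sO_D)$, and division-ring reduction forces $r=1$ and $\sO_D/\varpi\sO_D$ reduced; when the residue field is finite (the case $\bO=\Zp$ that the paper actually uses) a noncommutative division algebra over a local field has ramification index equal to its index, so reducedness forces $D$ commutative and $\iota(K_i)=1=\iota(F_i)$. So your plan closes over $\Zp$; for an arbitrary DVR this step --- for you and for the paper alike --- requires the ramification theory of division algebras over complete discretely valued fields and is not immediate.
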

\begin{proof}
The implication
$(3)\Rightarrow (2)$ is obvious: the $A$-module $M_i$ is an explicit lifting 
of a simple $A_\bF$-module $F_i^{n_i}$.  

The equivalence $(2)\Leftrightarrow (1)$ follows from Proposition~\ref{KRS_cond}.
Since $A_\bF$ is semisimple, its principal indecomposable modules are the same as simple
modules. 
By 
Proposition~\ref{KRS_cond}, they admit liftings.

To prove the implication
$(3)\Rightarrow (4)$, note that $M_i$
is a projective $A$-module by Lemma~\ref{p_lift}.
Define $L_i$ as the endomorphism ring $L_i\coloneqq \mbox{End}_AM_i$. Since $M_i$ is $A$-projective, the natural map
\[
L_i \rightarrow \mbox{End}_{A_\bF} F_i^{n_i} \cong F_i
\]
is surjective and $L_i \otimes_{\bO} \bF \cong F_i$. Observe that the second natural map
\[
\phi: L_i \rightarrow L_i\otimes_{\bO} \bK 
\rightarrow
\mbox{End}_{A_\bK} (M_i\otimes_{\bO} \bK) \cong 
\mbox{End}_{A_\bK} K_i^{m_i} \cong K_i 
\]
is injective.
If $w\in L_i$ and $w\neq 0$,
then $w(x) \neq 0$ for some $x\in M_i\subseteq K_i^{m_i}$.
Hence, $\phi(w)(x) \neq 0$ and $\phi(w)\neq 0$.

Now observe that $\phi(L_i)\cong L_i$ is a lattice in $K_i$. Pick any $w\in K_i$. For some natural $n$ we have $\varpi^n w(M_i) \subseteq M_i$. Thus, $\varpi^n w\in \phi (L_i)$.

The outstanding requirements follow immediately:
$n_i=m_i$ and $\iota (K_i) = \iota (F_i)$ .

The remaining implications utilise the requirement that $A$ is a maximal order.

Let us prove $(1)\Rightarrow (3)$.
Note that a simple $A_\bF$-module $F_i^{n_i}$ has a form $A_\bF e$ for a minimal idempotent $e$ in $A_\bF$.
It can be lifted to a minimal idempotent $\tilde{e}\in A$. Observe that $M_i \coloneqq A\tilde{e}$ is a lifting of $F_i^{n_i}$.  
Since $\tilde{e}$ is minimal, $M_i$ is an indecomposable
$A$-module. By \cite[Th. 26.12]{curtis1981methodsVol1}, 
$A_\bK \tilde{e}$ is a simple $A_\bK$-module.
After a suitable permutation, it is 
$K_i^{m_i}$.  The implication is proved.

The final implication
$(4)\Rightarrow (2)$ is an immediate corollary of the structure theorem for maximal orders. By \cite[Th. 26.20]{curtis1981methodsVol1}, $A = A_1 \oplus \ldots \oplus A_t$ where each $A_i$ is a maximal order in $M_{m_i} (K_i)$.
By \cite[Th. 26.23]{curtis1981methodsVol1}, after a change of basis, we have
$A_i=M_{m_i} (B_i)$ for a maximal order $B_i\leq K_i$, and moreover, $B_i \supseteq L_i$. It follows that the natural map 
\[
F_i \cong L_i \otimes_\bO \bF \rightarrow B_i \otimes_\bO \bF 
\]
is an isomorphism. Hence, 
$A_i \otimes_\bO \bF \cong M_{n_i} (F_i)$
and all simple $A_\bF$-modules have liftings.
\end{proof}

Let us now specialise to the situation of our primary concern:
$A \coloneqq \bO G$ for a finite group $G$,
where
$\bO \coloneqq \mathbb{Z}_{(p)}$, the localisation of the integers at the prime $p$, $p \nmid |G|$. 
Note that
$\mathbb{F} =\mathbb{Z}_{(p)}/p\mathbb{Z}_{(p)} \cong \mathbb{F}_p$
 and  $\bK = \bQ$, the rational numbers.
Observe that $A$ is a maximal order in
$\bQ G$ \cite[Prop. 27.1]{curtis1981methodsVol1}
so that
Proposition~\ref{KRS_cond_2}
is fully applicable.
Let us use it to examine two examples.

\begin{ex}
Let $G = C_n$, $p\nmid n$, a slight generalisation of~\eqref{QC3}. 
The following argument is essentially 
a proof of the last two statements
of Proposition~\ref{KRS_facts}.
Clearly,
\[ \bQ G \cong 
\bigoplus_{d|n} \bQ(\sqrt[d]{1}) 
\cong \bigoplus_{d|n} \dfrac{\bQ [z]}{(\Psi_d(z))}, \quad
\bF_p G \cong 
\bigoplus_{i} \dfrac{\bF_p [z]}{(\psi_i(z))} \] 
where $\Psi_d$ is the $d$-th cyclotomic polynomial and $\psi_i$ are ``cyclotomic polynomials" modulo $p$, i.e., prime factors of $z^n-1 \in \bF_p [z]$. All Schur indices are equal to 1. By Proposition~\ref{KRS_cond_2} and \cite[Th 3.5]{Lidl}, the following three conditions are equivalent:
\begin{itemize}
    \item $\ZG$ is semiperfect,
    \item for each $d\mid n$ the polynomial $\Psi_d (z)$ remains irreducible modulo $p$,
    \item for each $d\mid n$ the order of $p$ in the group $\mbox{GL}_1 (\bZ /(d))$ is $\varphi (d)$ (maximal allowed by Euler's Theorem).
\end{itemize}
\end{ex}
Let us now see an example where the Schur index plays a role.
\begin{ex}
Let $G = Q_8$, the quaternion group of order $8$, $p\neq 2$. Then
\[
\bQ G \cong \bQ^{\oplus 4} \oplus \bH_{\bQ}, \quad \bF_p G \cong {\bF}_p^{\oplus 4} \oplus M_2(\bF_p)
\]
where ${\bH}_{\bQ}$ is the quaternion algebra over $\mathbb{Q}$.
Then $\iota (\bH_{\bQ} )=2 \neq 1 = \iota (\bF_p)$.
By Proposition~\ref{KRS_cond_2}, $\ZG$ is not semiperfect.
\end{ex}

Let us recall the well-known facts about the Artin-Wedderburn decomposition of $\bQ G$
in terms of its complex characters. 
They are a combination of the Brauer-Fein theorem \cite{fossum1974center}
and various standard facts \cite[Section 12.2]{serre1977linear}.

\begin{lemma}
Let $G$ be a finite group with the Artin-Wedderburn decomposition of $A_\bK = \bQ G$ as in \eqref{ArWe}.
Let $Z_i \coloneqq Z(K_i)$,
$s_i = \dim_{\bQ}Z_i$
and
$d_i = \iota (K_i)$.
The following statements hold.

\begin{enumerate}[label=\roman*),left=0.5ex]
\item For each summand $M_{m_i}(K_i)$, 
we have $M_{m_i}(K_i)\otimes_{\bQ} \bar{\bQ} \cong M_{m_id_i}(\bar{\bQ})^{\oplus s_i}$, which yields $s_i$ 
distinct complex characters 
$\chi_{i,1}, \ldots, \chi_{i,s_i}$
of degree $m_id_i$.
\item The set of complex irreducible characters is a disjoint union
\[
\Irr (G) \; = \; 
\coprod_{i=1}^{t} \{\chi_{i,1}, \ldots, \chi_{i,s_i}\} \, . 
\]
\item For each $i$ and $j \in \{1,\ldots, s_i\}$ the field $Z_i$ is $\mathbb{Q}(\chi_{i,j})$, the field of values of the character.
\item For each $i$, the extension $Z_i \supseteq\mathbb{Q}$ is Galois.
\item The characters $\chi_{i,1}, \ldots, \chi_{i,s_i}$ constitute one $\textup{Gal}(Z_i/\mathbb{Q})$-orbit.
\end{enumerate}
\end{lemma}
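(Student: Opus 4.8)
The plan is to pass to the algebraic closure and exploit the action of $\Gamma \coloneqq \textup{Gal}(\bar{\bQ}/\bQ)$ on $\bar{\bQ}G \cong \bQ G \otimes_\bQ \bar{\bQ}$, reading everything off from how the blocks $M_{m_i}(K_i)$ split after base change. For (i) I would compute $M_{m_i}(K_i)\otimes_\bQ\bar\bQ$ in two stages: first $M_{m_i}(K_i)\otimes_\bQ\bar\bQ \cong M_{m_i}(K_i)\otimes_{Z_i}(Z_i\otimes_\bQ\bar\bQ)$, and since $Z_i/\bQ$ is separable of degree $s_i$ (we are in characteristic $0$), $Z_i\otimes_\bQ\bar\bQ \cong \bar\bQ^{\oplus s_i}$ indexed by the embeddings $\sigma\colon Z_i\hookrightarrow\bar\bQ$; second, each factor $K_i\otimes_{Z_i,\sigma}\bar\bQ\cong M_{d_i}(\bar\bQ)$ because $\bar\bQ$ is a splitting field for the central division algebra $K_i$ of index $d_i$. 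Combining gives $M_{m_i}(K_i)\otimes_\bQ\bar\bQ\cong M_{m_id_i}(\bar\bQ)^{\oplus s_i}$, whose $s_i$ summands are orthogonal two-sided ideals, hence pairwise non-isomorphic simple $\bar\bQ G$-modules, producing $s_i$ distinct irreducible characters $\chi_{i,1},\ldots,\chi_{i,s_i}$ of degree $m_id_i$.

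Statement (ii) then follows by summing over $i$: we get $\bar\bQ G\cong\bigoplus_i M_{m_id_i}(\bar\bQ)^{\oplus s_i}$, distinct $e_i$ stay orthogonal after base change so characters from distinct blocks differ, and $\bigcup_i\{\chi_{i,j}\}$ exhausts the simple $\bar\bQ G$-modules, i.e.\ all of $\Irr(G)$. I would then set up the $\Gamma$-action on primitive central idempotents of $\bar\bQ G$ by $e_\chi^\sigma=e_{\chi^\sigma}$ with $\chi^\sigma(g)=\sigma(\chi(g))$, and note that $e_i=\sum_{j=1}^{s_i}e_{\chi_{i,j}}$ is $\Gamma$-fixed, as it lies in $\bQ G=(\bar\bQ G)^\Gamma$. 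Primitivity of $e_i$ in $\bQ G$ forces $\{\chi_{i,1},\ldots,\chi_{i,s_i}\}$ to be a single $\Gamma$-orbit, since a splitting into two $\Gamma$-stable pieces would split $e_i$ into two nonzero $\Gamma$-fixed orthogonal idempotents.

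For (iii) I would identify $Z_i$ with $\bQ(\chi_{i,j})$ via the central character. Restricting $\omega_{\chi_{i,j}}$ to $Z(\bQ G)e_i=Z_i$ gives an embedding $Z_i\hookrightarrow\bC$, sending a class sum $\hat C e_i$ to $|C|\chi_{i,j}(g_C)/\chi_{i,j}(1)$, whose image is the field generated by these central-character values. The crux of the whole statement is the lemma that this field equals $\bQ(\chi_{i,j})$: one inclusion holds because any $\sigma$ fixing $\chi$ fixes each $\omega_\chi(\hat C)$; the reverse uses $\omega_{\chi^\sigma}=\sigma\circ\omega_\chi$ together with the fact that the central character determines the simple $\bC G$-module \cite[Section 12.2]{serre1977linear}, so that $\sigma$ fixing all $\omega_\chi(\hat C)$ forces $\chi^\sigma=\chi$. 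This yields $Z_i\cong\bQ(\chi_{i,j})$ as subfields of $\bC$.

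Finally, (iv) follows since every character value is a sum of $|G|$-th roots of unity, so $\bQ(\chi_{i,j})\subseteq\bQ(\zeta_{|G|})$; the cyclotomic extension is abelian, hence all intermediate fields are Galois over $\bQ$, giving $Z_i/\bQ$ Galois. For (v), the $\Gamma$-action on $\{\chi_{i,j}\}$ factors through $\textup{Gal}(Z_i/\bQ)$ because all relevant values lie in $Z_i$; it is transitive by (ii) and, as the orbit has size $s_i=[Z_i:\bQ]=|\textup{Gal}(Z_i/\bQ)|$, it is in fact simply transitive, so in particular a single orbit. The one genuinely delicate point is the central-character lemma underlying (iii); the remaining steps are bookkeeping with base change and orbit--stabiliser.
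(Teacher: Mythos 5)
Your proof is correct, but note that there is nothing in the paper to compare it against: the authors do not prove this lemma at all, stating instead that it is "a combination of the Brauer--Fein theorem \cite{fossum1974center} and various standard facts \cite[Section 12.2]{serre1977linear}". What you have written is a correct, self-contained reconstruction of exactly the content of those citations, along the standard route. Parts (i)--(ii) are the base-change computation $M_{m_i}(K_i)\otimes_\bQ\bar{\bQ}\cong M_{m_i}(K_i)\otimes_{Z_i}(Z_i\otimes_\bQ\bar{\bQ})\cong M_{m_id_i}(\bar{\bQ})^{\oplus s_i}$ together with Galois descent on central idempotents: a proper $\Gamma$-stable subsum of $e_i=\sum_j e_{\chi_{i,j}}$ would produce a rational central idempotent refining $e_i$, contradicting primitivity --- this is the right argument and it is what makes (v) work later. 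Part (iii) is precisely the Brauer--Fein content, and your proof of the crux $\bQ(\omega_\chi)=\bQ(\chi)$ --- via $\omega_{\chi^\sigma}=\sigma\circ\omega_\chi$ and the fact that distinct irreducibles have distinct central characters (since $\omega_\chi(e_\psi)=\delta_{\chi\psi}$) --- is the standard one and is sound. Two points deserve explicit mention in a write-up: in (iii) the fixed-field argument should be run inside the finite Galois extension $\bQ(\zeta_{|G|})/\bQ$ (both $\bQ(\chi)$ and $\bQ(\omega_\chi)$ live there), so that the Galois correspondence applies cleanly; and in (v) the statement that the $\Gamma$-action on $\{\chi_{i,1},\ldots,\chi_{i,s_i}\}$ factors through $\mathrm{Gal}(Z_i/\bQ)$ uses both (iii) and (iv), as you indicate. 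In short, your proposal supplies a genuine proof where the paper has only references, and it matches the approach those references take.
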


Now, we are ready to prove 
the main theorem.
\begin{proof} {\em (of Theorem~\ref{Main_Result})}
Recall that for an algebraic number field $\bK$, the ring $\sO_{\bK}/(p)$ of integers modulo $p$ can be either a field, or a direct sum of at least two fields, or a non-reduced ring. These mutually exclusive possibilities mean that the prime $p$ is correspondingly either 
{\em inert}  in $\bK$,
or {\em split}  in $\bK$,
or {\em ramified}  in $\bK$.

Now $A=\ZG$ has Artin-Wedderburn decompositions~\eqref{ArWe}. 
Since $A$ is a maximal order (cf. the proof of $(4)\Rightarrow (2)$ of Proposition~\ref{KRS_cond_2}), we can write
\begin{equation} \label{ArWe2}
\ZG \cong M_{m_1}(B_1) \oplus \ldots \oplus M_{m_t}(B_t)\, ,
\quad  
\bF_p G \cong \oplus_{i=1}^t M_{m_i}(B_i\otimes_{\Zp} \bF_p)    
\end{equation}
where $B_i$ is a maximal order in $K_i$.

By Proposition~\ref{KRS_cond_2}, $A$ is semiperfect if and only $B_i\otimes_{\Zp} \bF\cong F_i$ for all $i$, after a suitable permutation of indices.

We proceed to prove the ``only if" direction.
By the Little Wedderburn Theorem, all $F_i$ are finite fields. This forces commutativity of both $B_i$ and $K_i$. 
Since $K_i = \bQ (\chi_{i,j})$ is commutative for all $i$ and $j$,
we conclude that $\iota (\chi)=1$ for all $\chi\in\Irr (G)$.
Further, we can rewrite the decompositions~\eqref{ArWe2} as
\begin{equation} \label{ArWe3}
\ZG \cong \oplus_{i=1}^t 
M_{m_i}(\sO_{\bQ (\chi_{i,1})} )\, ,
\quad  
\bF_p G \cong \oplus_{i=1}^t M_{m_i}(\sO_{\bQ (\chi_{i,1})} /(p) \, )\, ,
\end{equation}
where Proposition~\ref{KRS_cond_2}
forces all
$\sO_{\bQ (\chi)} \otimes_{\Zp} \bF_p$
to be fields. This means that the prime
$p$ is inert in all $\bQ (\chi)$.

For the ``if" direction, observe that the first condition (all $\iota (\chi)=1$) enables us to write the Artin-Wedderburn decompositions as in~\eqref{ArWe3}.
Now the second condition ensures that all
$\sO_{\bQ (\chi)} \otimes_{\Zp} \bF_p$
are fields. By Proposition~\ref{KRS_cond_2},
$\ZG$ is semiperfect.
\end{proof}

\section{The general case}
Let us consider the general case now.
Note that if $p\nmid |G|$, 
$\ZG$ is not a maximal order \cite[Prop. 27.1]{curtis1981methodsVol1}. This 
creates additional challenges.
However, many standard results still hold
under the assumption that $\ZG$
is semiperfect \cite[Ch. 17]{curtis1981methodsVol1}. In particular, we can settle the only-if direction of Conjecture~\ref{KGconj}.
\begin{prop}\label{clear-direction-of-KGconj}
If the ring  $\mathbb{Z}_{(p)}G$ is semiperfect,
then
$\alpha (\mathbf{G}^+(\bQ G))
\supseteq
\beta (\mathbf{K}^+(\bF_p G))$.   
\end{prop}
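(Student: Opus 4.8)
The plan is to identify $\beta(\mathbf{K}^+(\bF_p G))$ with the set of classes of projective $\bF_p G$-modules and then to realise each such class through the decomposition map, using semiperfectness to supply the necessary lattice.

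First I would unwind the right-hand side. Every element of $\mathbf{K}^+(\bF_p G)$ is the class $[Q]$ of an honest finitely generated projective $\bF_p G$-module $Q$, since the positive cone is the $\bN$-span of module classes and a nonnegative integer combination of projective classes is the class of a direct sum of projectives. Because $\beta$ is the Cartan homomorphism, induced by viewing a projective module as an ordinary one, we have $\beta[Q] = [Q]$ inside $\mathbf{G}(\bF_p G)$. It therefore suffices to prove that $[Q] \in \alpha(\mathbf{G}^+(\bQ G))$ for every finitely generated projective $\bF_p G$-module $Q$.

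Next I would feed in the hypothesis. Here $\Zp$ is a discrete valuation ring with residue field $\bF_p$ and fraction field $\bQ$, and $\ZG$ is a $\Zp$-order, so Proposition~\ref{KRS_cond} applies. Its equivalence $(1)\Leftrightarrow(4)$ turns semiperfectness of $\ZG$ into the existence of a lifting $\tilde Q$ of $Q$: a $\ZG$-module, projective (hence free, as $\Zp$ is a principal ideal domain) as a $\Zp$-module, with $\tilde Q/p\tilde Q \cong Q$ as $\bF_p G$-modules.

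Finally I would set $V \coloneqq \bQ \otimes_{\Zp}\tilde Q$, a finitely generated $\bQ G$-module in which $\tilde Q$ sits as a full $G$-stable $\Zp$-lattice. By the definition of the decomposition map $\alpha$ — and by the independence of $\alpha[V]$ from the chosen lattice — we obtain $\alpha[V] = [\tilde Q/p\tilde Q] = [Q]$ in $\mathbf{G}(\bF_p G)$. Since $[V]$ lies in $\mathbf{G}^+(\bQ G)$, this exhibits $[Q] = \beta[Q]$ as an element of $\alpha(\mathbf{G}^+(\bQ G))$, and the claimed inclusion follows. The only genuine input is the existence of the lifting $\tilde Q$, which is exactly what semiperfectness provides; the one point deserving care is the identification $\alpha[V] = [\tilde Q/p\tilde Q]$, which is immediate once $\tilde Q$ is recognised as a lattice in $V$ whose mod-$p$ reduction is $Q$. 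The substantive difficulty lives entirely in the converse inclusion — manufacturing liftings out of the abstract containment of positive cones — which this direction does not address.
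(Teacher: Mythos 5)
Your proof is correct, and at its core it runs on the same mechanism as the paper's: semiperfectness supplies a lift of each projective $\bF_p G$-module to a $\ZG$-lattice, and the decomposition map applied to the rationalisation of that lattice returns the original class. The difference is one of packaging. The paper keeps the argument at the level of Grothendieck groups: it cites the reduction isomorphism $\mathbf{K}^+(\ZG)\xrightarrow{\cong}\mathbf{K}^+(\bF_p G)$ \cite[Th.~18.2]{curtis1981methodsVol1} and the commutativity of the Cartan--Brauer triangle \cite[Prop.~18.5]{curtis1981methodsVol1}, so that the only thing left to check is that $\gamma\colon [M]\mapsto[\bQ\otimes_{\Zp}M]$ maps the positive cone $\mathbf{K}^+(\ZG)$ into $\mathbf{G}^+(\bQ G)$. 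You instead work element-wise: you realise each element of $\mathbf{K}^+(\bF_p G)$ as an honest projective module $Q$, invoke the paper's own Proposition~\ref{KRS_cond}, equivalence $(1)\Leftrightarrow(4)$, to produce the lifting $\tilde Q$, and verify directly that $\alpha[\bQ\otimes_{\Zp}\tilde Q]=[Q]=\beta[Q]$ via the lattice-independence (Brauer--Nesbitt) of the decomposition map. What the paper's route buys is brevity and placement of the statement inside the standard Cartan--Brauer formalism; what your route buys is self-containment --- it uses only results already proved in the paper plus the definition of $\alpha$, and it sidesteps the mild mismatch that Curtis--Reiner set up the triangle over the complete ring $\widehat{\Zp}$ rather than over $\Zp$, a gap the paper papers over with a ``cf.''. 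Your closing remark correctly locates the substance of Conjecture~\ref{KGconj} in the converse direction.
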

\begin{proof}
The reduction modulo $p$ yields an isomorphism 
$\mathbf{K}^+(\ZG)\xrightarrow{\cong}\mathbf{K}^+(\bF_p G)$,
cf. \cite[Th. 18.2]{curtis1981methodsVol1}.
From the commutativity of the Cartan-Brauer Triangle \cite[Prop. 18.5]{curtis1981methodsVol1}, 
it suffices to observe that the natural map
\[
\gamma: \mathbf{K}(\ZG)\rightarrow \mathbf{G}(\bQ G), \quad [M] \mapsto [\bQ\otimes_{\Zp} M]
\]
takes $\mathbf{K}^+(\ZG)$
to $\mathbf{G}^+(\bQ G)$. But this is obvious because $\bQ \otimes_{\Zp} M$ can be wrtten as a finite direct sum of simple $\bQ G$-modules.
\end{proof}

Our second observation is that Conjecture~\ref{KGconj} holds true 
under a condition on the Sylow $p$-subgroup.
Note that this trivially covers the case $p\nmid |G|$. Furthermore, Conjecture~\ref{KGconj} reduces to 
Theorem~\ref{Main_Result} in this case,
and therefore, holds.

\begin{prop}
Suppose the group $G$ is a semidirect product $P\rtimes H$ of its Sylow $p$-subgroup $P$ and a $p'$-subgroup $H$. 
The following statements are equivalent.
\begin{enumerate}
    \item $\alpha (\mathbf{G}^+(\bQ G))
\supseteq
\beta (\mathbf{K}^+(\bF_p G))$.
    \item $\ZG$ is semiperfect.
    \item $\Zp H$ is semiperfect.
\end{enumerate}
\end{prop}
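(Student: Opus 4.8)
The plan is to dispatch two easy equivalences and then concentrate on the single hard implication. First I would record that, since $P\unlhd G$ is the Sylow $p$-subgroup and $G=PH$ with $P\cap H=1$, we have $G/P\cong H$. Proposition~\ref{KRS_facts}(1) with $N=P$ then yields $(2)\Rightarrow(3)$, while Proposition~\ref{KRS_facts}(3) with $N=P$ yields $(3)\Rightarrow(2)$; so $(2)\Leftrightarrow(3)$. The implication $(2)\Rightarrow(1)$ is exactly Proposition~\ref{clear-direction-of-KGconj}. Thus everything reduces to proving $(1)\Rightarrow(3)$.

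\textbf{Transporting $(1)$ down to $H$.} Next I would exploit that a normal $p$-subgroup acts trivially on every simple $\bF_pG$-module, so inflation along $G\twoheadrightarrow H$ is a bijection on simples and gives an isomorphism $\mathrm{Inf}\colon\mathbf{G}(\bF_pH)\xrightarrow{\cong}\mathbf{G}(\bF_pG)$, with inverse the restriction $\res_H^G$; both preserve positive cones. Dually, Frobenius reciprocity shows that $\ind_H^GS_i$ has simple head $\mathrm{Inf}(S_i)$, so it is the principal indecomposable covering $\mathrm{Inf}(S_i)$ and $\ind_H^G$ induces an isomorphism $\mathbf{K}(\bF_pH)\xrightarrow{\cong}\mathbf{K}(\bF_pG)$ on positive cones. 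As the decomposition map commutes with inflation/restriction and the Cartan map with induction, I can push the containment in $(1)$ through these isomorphisms and rephrase it purely inside $\mathbf{G}(\bF_pH)$. The key computation, valid because $G=P\rtimes H$, is
\[
\res_H^G\ind_H^GS_i\;\cong\;(\bF_pP)_{\mathrm{conj}}\otimes_{\bF_p}S_i\;\cong\;S_i\oplus\bigl(M\otimes_{\bF_p}S_i\bigr),
\]
where $(\bF_pP)_{\mathrm{conj}}$ carries the conjugation action of $H$ and $M=\bF_p[P\setminus\{1\}]$. Consequently $(1)$ becomes the single statement: $[(\bF_pP)_{\mathrm{conj}}\otimes S_i]\in\alpha_H(\mathbf{G}^+(\bQ H))$ for every $i$.

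\textbf{The hard implication, via a symmetry obstruction.} Because $p\nmid|H|$, the ring $\Zp H$ is described by Theorem~\ref{Main_Result}, and $(3)$ fails precisely when some Wedderburn block of $\bQ H$ is \emph{bad}, i.e.\ has Schur index $>1$ or has $p$ non-inert in its centre $\bQ(\chi)$. Over such a block the simple $\bF_pH$-modules form a single Galois orbit, and $\alpha_H(\mathbf{G}^+(\bQ H))$ meets the associated coordinate subspace only in a proper Galois-stable subcone (the diagonal $\bN\cdot(1,\dots,1)$ in the totally split case). I would then prove $(1)\Rightarrow(3)$ contrapositively. The module $(\bF_pP)_{\mathrm{conj}}$ is a permutation module and hence lifts, so its class lies in $\alpha_H(\mathbf{G}^+(\bQ H))$ and is Galois-symmetric on each bad block; the fixed point $1\in P$ contributes the extra summand $S_i$, adding $1$ to the $S_i$-coordinate and breaking the symmetry that every element of $\alpha_H(\mathbf{G}^+(\bQ H))$ must obey on a bad block. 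Therefore $[(\bF_pP)_{\mathrm{conj}}\otimes S_i]\notin\alpha_H(\mathbf{G}^+(\bQ H))$ as soon as $S_i$ lies over a bad block, so $(1)$ fails and $(1)\Rightarrow(3)$ follows.

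\textbf{Main obstacle.} The step I expect to be genuinely delicate is showing that the non-fixed part $M\otimes S_i$ cannot conspire to cancel the asymmetry introduced by the $S_i$-summand. I would attack this through the orbit decomposition
\[
M\otimes_{\bF_p}S_i\;\cong\;\bigoplus_{x}\ind_{C_H(x)}^H\res_{C_H(x)}S_i,
\]
summed over the nontrivial conjugation orbits with representatives $x$, together with the compatibility $\alpha_H\circ\ind=\ind\circ\alpha$; this reduces the required symmetry to the behaviour of $\res_{C_H(x)}S_i$ over the smaller $p'$-groups $C_H(x)$, to be settled by induction on $|G|$. The positivity of the permutation character $h\mapsto|C_P(h)|$ should then force the $S_i$-coordinate of $[(\bF_pP)_{\mathrm{conj}}\otimes S_i]$ to strictly dominate its Galois companions on the bad block, which is exactly the asymmetry needed to finish.
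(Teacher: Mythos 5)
Your easy steps are correct: $(2)\Leftrightarrow(3)$ via parts (1) and (3) of Proposition~\ref{KRS_facts}, $(2)\Rightarrow(1)$ via Proposition~\ref{clear-direction-of-KGconj}, and your transport of condition (1) down to $H$ is also sound --- indeed rather elegant, since $\res^G_H$ is exact (so it is well defined on Grothendieck groups), commutes with decomposition maps, inflation identifies the simples of $\bF_pG$ with those of $\bF_pH$, and $\ind_H^GS_j$ really is the projective cover of the inflation of $S_j$; because inflation is a section of restriction on $\mathbf{G}^+(\bQ G)$ as well, your restated condition is even equivalent to (1). The gap is that the entire hard implication $(1)\Rightarrow(3)$ then rests on the claim that $[(\bF_pP)_{\mathrm{conj}}\otimes S_j]$ violates the constraints cutting out $\alpha_H(\mathbf{G}^+(\bQ H))$ whenever $S_j$ lies over a bad block, and neither of your proposed tools proves it. First, the orbit decomposition is circular: the summands $\ind_{C_H(x)}^H\res_{C_H(x)}S_j$ turn the required inequality into inner-product estimates against the permutation characters $\ind_{C}^H\mathbf{1}$, which are statements of exactly the same shape as the one you started from, and the ``induction on $|G|$'' has no inductive statement --- the groups $C_H(x)$ are subgroups of $H$, not smaller semidirect products to which the Proposition applies. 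Second, and more seriously, the symmetry argument cannot detect the Schur index. If a block is bad because $\iota(K_i)=s_i>1$ while $p$ is inert in $Z(K_i)$ (the basic example being $K_i=\bH_{\bQ}$ for $H=Q_8$ and any odd $p$), that block carries a \emph{single} simple $\bF_pH$-module: there are no Galois companions and no symmetry to break. Membership in $\alpha_H(\mathbf{G}^+(\bQ H))$ is then a divisibility condition (the coordinate must be divisible by $s_i$), about which ``the fixed point $1\in P$ adds $1$'' says nothing; in the $Q_8$ case one must show that every non-fixed $H$-orbit of $P$ contributes an even number while the full fixed-point set $C_P(H)$ contributes an odd number (it does, being a $p$-group with $p$ odd) --- an argument entirely absent from, and different in kind from, your positivity heuristic.

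The paper avoids this problem altogether by transporting condition (1) along the \emph{quotient} functor rather than restriction: with $I=\ker(\bF_pG\to\bF_pH)$ one has $I=J(\bF_pG)$, so $M\mapsto M/IM$ sends the projective cover $P_j$ to its \emph{head} $S_j$ rather than to its full composition series. Condition (1) then immediately yields that every simple class $[S_j]$ lies in $\alpha_H(\mathbf{G}^+(\bQ H))$, i.e.\ that $\alpha_H$ is surjective, and surjectivity is incompatible with any bad block because $\alpha_H([K_i^{m_i}])=s_i\sum_{j\in X(i)}[F_j^{n_j}]$ with the sets $X(i)$ pairwise disjoint --- one stroke rules out both $s_i>1$ and $|X(i)|>1$. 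That is the idea your proposal is missing: by keeping the whole class $[\res^G_H P_j]$ (a Cartan column) instead of just the head, you inherit a genuinely hard positivity-and-divisibility problem, which your sketch flags as the ``main obstacle'' but does not solve.
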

\begin{proof} 
Proposition~\ref{clear-direction-of-KGconj} asserts 
$(2) \Rightarrow (1)$. 

The implication $(3) \Rightarrow (2)$
is part (3) of Proposition~\ref{KRS_facts}.

Let us prove the final implication
$(1) \Rightarrow (3)$.
By $I_R$ we denote the ideal $(1-x)_{x\in P}$ in the group algebra $RG$. Since $P$ is normal,
\[
I_R = \ker (RG\rightarrow RH), \ \ 
R G /I_R \cong R H \ 
\mbox{ and } \ 
I_{\bF_p} = J(\bF_p G) \, . \ 
\]
Observe that for every $\ZG$-module $M$, we have isomorphisms of $\bF_p H$-modules
\[
\bF_p H \otimes_{\bF_p G} (\bF_p \otimes_{\Zp} M)
\cong
\bF_p \otimes_{\Zp} (\Zp H \otimes_{\ZG} M)
\cong 
\bF_p H \otimes_{\ZG} M,
\]
which, in its turn, allows us to conclude that the diagram
\[
\begin{CD}
\mathbf{G}^+(\bQ G) @>>{\alpha_G}> \mathbf{G}^+(\bF_p G)\\
@V{M\mapsto \bQ H \otimes_{\bQ G} M \cong M/(I_{\bQ}M)}V{\gamma_\bQ}V @V{\gamma_{\bF_p}}VV\\
\mathbf{G}^+(\bQ H) @>{\alpha_H}>> \mathbf{G}^+(\bF_p H)
\end{CD}
\]
is commutative. Let us fix the Artin-Wedderburn decompositions for $H$ as in~\eqref{ArWe}, so that we have the sets of simple modules
\[
\Irr (\bF_p G) = 
\Irr (\bF_p H) =
\{ F^{n_i}_i\}_{i=1,\ldots,k}, \ 
\Irr (\bQ H) =
\{ K_i^{m_i}\}_{i=1,\ldots,t}
\subseteq
\Irr (\bQ G),
\]
whose classes constitute $\bZ$-bases of the four Grothendieck groups in the diagram. 
Let $P_i$ be the $\bF_p G$-projective cover of $F_i^{n_i}$. Notice that
\[
\gamma_{\bF_p} ([P_i]) =
\gamma_{\bF_p} ([F_i^{n_i}]) = [F_i^{n_i}]\, .
\]
Condition~(1) tells us that the classes $[P_i]$ belong to the image of $\alpha_G$. Hence, the composition 
$\gamma_{\bF_p}\circ \alpha_G = \alpha_H \circ \gamma_{\bQ}$ is surjective. Now observe that 
\[
L \in \Irr (\bQ G) \ \Longrightarrow \ 
\gamma_{\bQ} ([L]) =
\begin{cases}
[L]  & \mbox{ if } L\in \Irr (\bQ H) ,\\
0 &    \mbox{ if } L\not\in \Irr (\bQ H).
\end{cases}
\]
Hence, $\gamma_{\bQ}$ is surjective, and therefore, $\alpha_H$ is surjective. 
Let 
$L_i$ be a maximal order in $K_i$, $s_i = \iota (K_i)$. 
Semisimplicity of $\bF_p H$ means that $p$ cannot be ramified in the field $Z(K_i)$. Computationally, 
\[
\alpha_H ([K_i^{m_i}]) = 
[(L_i \otimes_{\Zp} \bF_p)^{m_i}]
= s_i 
\sum_{j\in X(i)} [F_j^{n_j}]
\]
where 
the sets of indices $X(i)$ do not intersect for different $i$. Because of that, surjectivity of $\alpha_H$ implies that 
$s_i=1$ 
and each set of indices $X(i)$ consist of a single index. 
Hence, the conditions of Theorem~\ref{Main_Result} are met for $H$
and $\Zp H$ is semiperfect.

\end{proof}

The second condition of Conjecture~\ref{KGconj} essentially means that every projective $\bF_p G$-module $P$ can be written in the $K$-theory as
\[
[P] = \sum_{V\in \Irr (\bQ G)} N_V [\bF_p \otimes_{\Zp} \widetilde{V}] \in
\mathbf{G}(\bF_p G)
\quad \mbox{ with } N_V\in\bN 
\]
where $\widetilde{V}$ are ${\ZG}$-lattices in the $\bQ G$-modules $V$. This condition can be strengthened. We say that a filtration on an $\bF_p G$-module $M$
\[
M = M_k \geq M_{k-1} \geq \ldots \geq M_1 \geq M_0 = 0
\]
is {\em a Brauer-Humphreys filtration} if each consequent quotient $M_{i}/M_{i-1}$ 
is isomorphic to 
$\bF_p \otimes_{\Zp} \widetilde{V}$
where
$\widetilde{V}$ is a $\ZG$-lattice in a simple $\bQ G$-module $V$.
\begin{prop}
If every projective indecomposable $\bF_p G$-module has a Brauer-Humphreys filtration,
then  $\ZG$ is semiperfect.
\end{prop}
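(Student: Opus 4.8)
The plan is to verify condition~(3) of Proposition~\ref{KRS_cond}: it suffices to produce, for each principal indecomposable (equivalently, projective indecomposable) $\bF_p G$-module $P$, a $\ZG$-lattice $\widetilde P$ with $\widetilde P/p\widetilde P\cong P$. Fix such a $P$ together with a Brauer-Humphreys filtration $0=M_0\subset M_1\subset\cdots\subset M_k=P$, where $M_i/M_{i-1}\cong \bF_p\otimes_{\Zp}\widetilde V_i$ for a $\ZG$-lattice $\widetilde V_i$ inside a simple $\bQ G$-module. I would argue by induction on the length $k$, lifting the filtration to a chain of $\ZG$-lattices $0=\widetilde M_0\subset\cdots\subset\widetilde M_k=\widetilde P$ with $\widetilde M_i/\widetilde M_{i-1}\cong\widetilde V_i$ and $\widetilde M_i/p\widetilde M_i\cong M_i$. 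Since lattices are $\Zp$-free, reduction modulo $p$ stays exact, so the final term $\widetilde M_k$ is $\Zp$-free and reduces to $P$; this is exactly the lifting demanded by Proposition~\ref{KRS_cond}, and Lemma~\ref{p_lift} then even makes it projective.

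The inductive step is to lift a single extension $0\to M_{i-1}\to M_i\to \bF_p\otimes\widetilde V_i\to 0$ to a lattice extension $0\to\widetilde M_{i-1}\to\widetilde M_i\to\widetilde V_i\to 0$, given a lift $\widetilde M_{i-1}$ of $M_{i-1}$. Here I would use two homological inputs. First, reducing a finite $\ZG$-projective resolution of the lattice $\widetilde V_i$ modulo $p$ yields a projective $\bF_p G$-resolution of $\bF_p\otimes\widetilde V_i$ (exactness is preserved because $\widetilde V_i$ is $\Zp$-free), whence $\mathrm{Ext}^n_{\ZG}(\widetilde V_i,N)\cong\mathrm{Ext}^n_{\bF_p G}(\bF_p\otimes\widetilde V_i,N)$ for every $\bF_p G$-module $N$. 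Feeding $0\to\widetilde M_{i-1}\xrightarrow{\ p\ }\widetilde M_{i-1}\to M_{i-1}\to 0$ into $\mathrm{Ext}^\ast_{\ZG}(\widetilde V_i,-)$ then identifies the reduction map $\mathrm{Ext}^1_{\ZG}(\widetilde V_i,\widetilde M_{i-1})\to \mathrm{Ext}^1_{\bF_p G}(\bF_p\otimes\widetilde V_i,M_{i-1})$ and exhibits its cokernel as $\mathrm{Ext}^2_{\ZG}(\widetilde V_i,\widetilde M_{i-1})[p]$. Thus the class of the extension $M_i$ lifts if and only if its image, a single Bockstein-type obstruction, vanishes in this finite $p$-group.

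The crux, and the step I expect to be the main obstacle, is the vanishing of that obstruction. Since $\bQ G$ is semisimple, all these $\mathrm{Ext}$ groups between lattices are finite $p$-groups, so flat base change gives $\mathrm{Ext}^\ast_{\ZG}(\widetilde V_i,\widetilde M_{i-1})\cong \mathrm{Ext}^\ast_{\widehat{\Zp} G}(\widehat{\Zp}\otimes\widetilde V_i,\widehat{\Zp}\otimes\widetilde M_{i-1})$, and the obstruction may be computed after completion. Over the complete ring $\widehat{\Zp}$ the projective lift $\widehat P$ of $P$ exists, because $\widehat{\Zp}G$ is semiperfect, and projectivity of $\widehat P$ lets one lift the surjection $P\twoheadrightarrow \bF_p\otimes\widetilde V_k$ to a surjection of lattices $\widehat P\twoheadrightarrow\widehat{\Zp}\otimes\widetilde V_k$, exhibiting the top extension over $\widehat{\Zp}$ and so killing the top obstruction. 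The difficulty is to propagate this down the whole filtration while remaining defined over $\Zp$: I would strengthen the inductive hypothesis to carry, alongside $\widetilde M_{i-1}$, an isomorphism $\widehat{\Zp}\otimes\widetilde M_{i-1}\cong \widehat M_{i-1}$ with a suitable sublattice $\widehat M_{i-1}\subseteq \widehat P$, and then transport the vanishing complete obstruction back across the base-change isomorphism to build $\widetilde M_i$ over $\Zp$ with matching completion. Making this matching of integral and $p$-adic lattices succeed at every stage is precisely the delicate local-to-global point underlying Conjecture~\ref{KGconj}, and is where I expect the real work, and the essential use of the Brauer-Humphreys hypothesis, to concentrate.
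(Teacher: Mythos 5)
Your overall strategy is the same as the paper's: induct along the Brauer--Humphreys filtration, lift one extension at a time, and conclude via condition (3) of Proposition~\ref{KRS_cond} (with Lemma~\ref{p_lift} supplying projectivity of the lift). Your bookkeeping for the inductive step is also correct: the change-of-rings isomorphism $\mathrm{Ext}^n_{\ZG}(\widetilde V_i,N)\cong\mathrm{Ext}^n_{\bF_p G}(\bF_p\otimes_{\Zp}\widetilde V_i,N)$ holds (though not because of a \emph{finite} projective resolution, which a non-projective lattice over $\ZG$ does not have; any resolution works, since all syzygies are $\Zp$-free), and feeding $0\to\widetilde M_{i-1}\xrightarrow{p}\widetilde M_{i-1}\to M_{i-1}\to 0$ into the long exact sequence correctly identifies the obstruction to lifting the extension class as an element of the finite $p$-group $\mathrm{Ext}^2_{\ZG}(\widetilde V_i,\widetilde M_{i-1})[p]$.

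The gap is exactly where you flag it, and it is genuine: you never prove that this obstruction vanishes, and that vanishing \emph{is} the proposition -- everything else is formal. The paper settles precisely this point in one stroke by citing Curtis--Reiner \cite[Cor. 25.6]{curtis1981methodsVol1}, which gives that the reduction map $\mathrm{Ext}^1_{\ZG}(\widetilde{P_i},\widetilde V)\to\mathrm{Ext}^1_{\bF_p G}(P_i,V)$ is an isomorphism for the lattices at hand, so the class of $P_{i+1}$ lifts and the induction closes. Your substitute route via completion cannot close the gap as sketched: semiperfectness of $\widehat{\Zp}G$ lifts $P$ to a projective $\widehat P$, and projectivity of $\widehat P$ does lift the top surjection $P\twoheadrightarrow\bF_p\otimes_{\Zp}\widetilde V_k$ to a surjection onto $\widehat{\Zp}\otimes_{\Zp}\widetilde V_k$, but its kernel is no longer projective, so the same trick cannot be repeated at the next layer; you are then back to needing the obstruction to vanish over $\widehat{\Zp}$, which is the identical problem (on these finite $p$-groups, base change to the completion is an isomorphism compatible with the connecting maps, so the detour gains nothing). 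Contrary to your closing remark, the missing ingredient is not the delicate local-to-global phenomenon behind Conjecture~\ref{KGconj}; it is a known statement about extensions of lattices over orders, and quoting (or reproving) it is what your argument lacks.
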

\begin{proof} Let $P$ be a projective indecomposable $\bF_p G$-module, $P_i$ -- its Brauer-Humphreys filtration.
We construct the lifting $\widetilde{P}_i$ of each $P_i$ recursively, $i=1,2,\ldots, n$. 

The definition of the filtration yields $\widetilde{P}_1$.

Now suppose $\widetilde{P}_i$ is constructed. Let $\widetilde{V}$ be a lifting of $P_{i+1}/P_i$. By \cite[Cor. 25.6]{curtis1981methodsVol1}, the natural map
\[
\psi = `` \bF_p \otimes_{\bZ} \, - \, ":
\mbox{Ext}^{1}_{\ZG} (\widetilde{P_i},\widetilde{V})
\xrightarrow{\cong}
\mbox{Ext}^{1}_{\bF_p G} ({P_i},{V})
\]
is an isomorphism. A $\ZG$-module representing $\psi^{-1} (P_{i+1})$ is a lifting of $P_{i+1}$.

By Proposition~\ref{KRS_cond}, $\ZG$ is semiperfect. 
\end{proof}

We finish the paper with 
an investigation of the semiperfectness of $\ZG$ for the smallest simple non-abelian group.

\begin{ex}
Let $G=A_5$.
Its complex character table is given in Table \ref{complex chars of A_5}. We use the degrees as the character indices and label the columns by the orders of elements in the associated conjugacy class. We also include columns describing $\mathbb{Q}(\rho)$ and the Schur index.
\begin{table}[h!]
\centering
$\begin{array}{c|ccccc|cc}
  \rm &1&2&3&5A&5B&\mathbb{Q}(\rho)&\iota(\rho) \\
\hline
  \rho_{1}&1&1&1&1&1&\mathbb{Q}&1\\
  \rho_{3}&3&-1&0&\varphi&1-\varphi&\mathbb{Q}(\sqrt{5})&1\\
  \widetilde{\rho_{3}}&3&-1&0&1-\varphi&\varphi&\mathbb{Q}(\sqrt{5})&1\\
  \rho_{4}&4&0&1&-1&-1&\mathbb{Q}&1\\
  \rho_{5}&5&1&-1&0&0&\mathbb{Q}&1\\
\end{array}$
\caption{Complex character table of $A_5$, where $\varphi := \frac{1+\sqrt{5}}{2}$}
\label{complex chars of A_5}
\end{table}
Note that there are four rational characters: $\rho_1$, $\rho_{6} := \rho_3 + \widetilde{\rho_3}$, $\rho_4$ and $\rho_5$ (and the conjugacy classes $5A$ and $5B$ are indistinguishable by them). 

Let $p$ be a prime such that $p$ doesn't divide $|A_5| = 60$. 
By Theorem~\ref{Main_Result} and some standard arithmetic, the following conditions are equivalent:
\begin{itemize}
    \item $\mathbb{Z}_{(p)}A_5$ is semiperfect.
    \item The prime $p$ is inert in $\bQ (\varphi) = \bQ( \sqrt{5})$ (note that $\sO_{\bQ(\sqrt{5})}=\bZ \left[\varphi\right]$).
    \item $p \equiv 2,3\,(\text{mod } 5)$.
\end{itemize}

Next, consider $p=2,3,$ or $5$. We write ${}_p\phi_d$ for the Brauer character in characteristic $p$ of dimension $d$ (and $\widetilde{{}_p\phi_d}$ when there is a second distinct character of the same dimension -- there are never more than two). The Brauer character
for $\mathbb{F}_{2}A_{5}$, $\mathbb{F}_{3}A_{5}$, and
$\mathbb{F}_{5}A_{5}$ is shown in
Tables~\ref{tab2},\ref{tab3},\ref{tab4}.

\noindent
\begin{minipage}[t]{0.32\textwidth}
\centering
\captionsetup{type=table,justification=centering, width=\linewidth}
$$
\begin{array}{c|ccc}
    & 1 & 3 & 5 \\
\hline
{}_2\phi_1 & 1 & 1 & 1 \\
\widetilde{{}_2\phi_{4}} & 4 & -2 & -1 \\
{}_2\phi_{4} & 4 & 1 & -1 \\
\end{array}
$$
\caption{$\mathbb{F}_2A_5$}\label{tab2}
\end{minipage}%
\hfill
\begin{minipage}[t]{0.32\textwidth}
\centering
\captionsetup{type=table,justification=centering, width=\linewidth}
$$
\begin{array}{c|ccc}
    & 1 & 2 & 5 \\
\hline
{}_3\phi_1 & 1 & 1 & 1 \\
{}_3\phi_6 & 6 & -2 & 1 \\
{}_3\phi_4 & 4 & 0 & -1 \\
\end{array}
$$
\caption{$\mathbb{F}_3A_5$}\label{tab3}
\end{minipage}%
\hfill
\begin{minipage}[t]{0.32\textwidth}
\centering
\captionsetup{type=table,justification=centering, width=\linewidth}
$$
\begin{array}{c|ccc}
    & 1 & 2 & 3 \\
\hline
{}_5\phi_1 & 1 & 1 & 1 \\
{}_5\phi_3 & 3 & -1 & 0 \\
{}_5\phi_5 & 5 & 1 & -1 \\
\end{array}
$$
\caption{$\mathbb{F}_5A_5$}\label{tab4}
\end{minipage}
\\


We write $P(\phi)$ for the $\bF_p A_5$-projective cover of the simple $\mathbb{F}_pA_5$-module $\phi$ and $[P(\phi)]$ for its class in $\mathbf{G}(\bF_p A_5)$. Finally, given a rational character $\rho$, write $\text{Reduct}_p(\rho)$ for the ``reduction modulo $p$" of $\rho$. We claim that in all cases $\mathbb{Z}_{(p)}A_5$ is semiperfect. Indeed, semiperfectness of $\mathbb{Z}_{(p)}A_5$ boils down to whether the indecomposable projective modules of $\mathbb{F}_{(p)}A_5$ can be lifted. For each prime $p$, we explicitly give a lift (in the form of an idempotent in $\mathbb{Z}_{(p)}A_5$) for each indecomposable projective module $P({}_p\phi_d)$.

We start with $p=2$. Let us explicitly incorporate Conjecture~\ref{KGconj} into our discussion. We have the following reductions modulo $2$:
\[
\begin{aligned}
\text{Reduct}_2(\rho_1) = {}_{2}\phi_1 &\hspace{10ex} \text{Reduct}_2(\rho_6) = 2{}_{2}\phi_1 + \widetilde{{}_{2}\phi_4} \\
\text{Reduct}_2(\rho_4) = {}_{2}\phi_4 &\hspace{10ex} \text{Reduct}_2(\rho_5) = {}_{2}\phi_1 + \widetilde{{}_{2}\phi_4}
\end{aligned}
\]
Thus, $\text{Im}\big(\mathbf{G}^+(\bQ A_5) \xrightarrow{\alpha} \mathbf{G}^+(\bF_2 A_5)\big) = \text{span}_\mathbb{N}([{}_{2}\phi_1],[{}_{2}\phi_4], [{}_{2}\phi_1]+[\widetilde{{}_{2}\phi_4}])$. A short calculation (using the Cartan matrix or otherwise) yields that
\[
[P({}_{2}\phi_1)] = 4[{}_{2}\phi_1] + 2[\widetilde{{}_{2}\phi_4}], \  [P(\widetilde{{}_{2}\phi_4})] = 4[{}_{2}\phi_1] + 3[\widetilde{{}_{2}\phi_4}], \  [P({}_{2}\phi_4)] = [{}_{2}\phi_4].
\]
The group algebra $\mathbb{F}_2A_5$ has block decomposition $\mathbb{F}_2A_5 = B_0 \oplus B_1$, with:
\[B_0 = P({}_{2}\phi_1) \oplus P(\widetilde{{}_{2}\phi_4})^{\oplus 2},\hspace{5ex}B_1 = P({}_{2}\phi_4)^{\oplus 4} = {}_{2}\phi_4^{\oplus 4}\]
as left $\bF_2 A_5$-modules. 
The block $B_1$ is semisimple, and lifting $P({}_{2}\phi_4)$ is straightforward. 
We use Young symmetrisers to construct an idempotent in $\bQ S_5$, and modify it slightly to obtain a suitable idempotent in $\bZ_{(2)}A_5$. The strategy used to lift $P({}_{2}\phi_1)$ and $P(\widetilde{{}_{2}\phi_4})$ was different. We constructed idempotents in $\bF_2 A_5$, then lifted them $p$-adically.
Letting $\mathcal{L}(P(\phi)) \in \mathbb{Z}_{(2)}A_5$ denote an idempotent which ``lifts" $P(\phi)$, Table \ref{F2A5-idemopotents} describes the idempotents.

\begin{table}[h!]
    \fontsize{5.3pt}{5.3pt}\selectfont
    \centering
    \begin{tabular}[t]{|c|c|c|c|}
      \hline
      \phantom{\Big|}$A_5$ element & $\mathcal{L}(P({}_2\phi_1))$ & $\mathcal{L}(P(\widetilde{{}_2\phi_4}))$ & $\mathcal{L}(P({}_2\phi_4))$ \\
      \hline
    $()$& 1/5& 4/15& 1/15\\ 
    $(1,5,4,3,2)$& 1/5& -4/95 & 0\\ 
    $(1,4,2,5,3)$& 1/5& 4/19& 0\\ 
    $(1,3,5,2,4)$& 1/5& -98/285 & 0\\ 
    $(1,2,3,4,5)$& 1/5& -26/285& -1/15\\ 
    $(2,3,4)$& 0& -306/665& 1/15\\ 
    $(1,5,4)$& 0& -166/665 & 0\\ 
    $(1,4,5,3,2)$& 0& 146/1995 & 0\\ 
    $(1,3)$$(2,5)$& 0& -12/665 & 0\\ 
    $(1,2,4,3,5)$& 0& 496/1995 & -1/15\\ 
    $(2,5,4)$& 0& -12/665 & 0\\ 
    $(1,5,3,2,4)$& 0& 496/1995& 0\\ 
    $(1,4,5,2,3)$& 0& -306/665 & 0 \\
    $(1,3,5)$& 0& -166/665 & -1/15 \\ 
    $(1,2)$$(3,4)$& 0& 146/1995& 1/15\\ 
    $(2,4,3)$& 0& -88/665& 1/15\\ 
    $(1,5,4,2,3)$& 0& -103/1995& 0\\ 
    $(1,4)$$(3,5)$& 0& -8/105 & 0\\
    $(1,3,4,5,2)$& 0& -67/665 & 0\\ 
    $(1,2,5)$& 0& -8/399& -1/15\\ 
    $(3,5,4)$& 0& -22/95& 0\\ 
    $(1,5,3,4,2)$& 0& -1/57& 0\\ 
    $(1,4)$$(2,5)$& 0& 17/285& 0\\ 
    $(1,3,2,4,5)$& 0& 11/57& -1/15\\ 
    $(1,2,3)$& 0& 109/285& 1/15\\ 
    $(2,3,5)$& 0& -103/1995 & 0\\ 
    $(1,5)$$(3,4)$& 0& -8/105& -1/15\\ 
    $(1,4,2)$& 0& -67/665& 1/15\\ 
    $(1,3,2,5,4)$& 0& -8/399 & 0\\ 
    $(1,2,4,5,3)$& 0& -88/665& 0\\
      \hline
    \end{tabular}
    \hspace{1ex}
    \begin{tabular}[t]{|c|c|c|c|}
      \hline
      \phantom{\Big|}$A_5$ element & $\mathcal{L}(P({}_2\phi_1))$ & $\mathcal{L}(P(\widetilde{{}_2\phi_4}))$ & $\mathcal{L}(P({}_2\phi_4))$ \\
      \hline    
    $(2,5)$$(3,4)$& 0& 4/19& 0\\ 
    $(1,5)$$(2,4)$& 0& -98/285 & -1/15\\ 
    $(1,4)$$(2,3)$& 0& -26/285 & 1/15\\ 
    $(1,3)$$(4,5)$& 0& 4/15& 0\\ 
    $(1,2)$$(3,5)$& 0& -4/95& 0\\ 
    $(2,4)$$(3,5)$& 0& 8/57& 0\\ 
    $(1,5,2,3,4)$& 0& 27/95  & 0\\ 
    $(1,4,5)$& 0& 22/95 & -1/15\\ 
    $(1,3,2)$& 0& -6/19 & 1/15\\ 
    $(1,2,5,4,3)$& 0 & -17/285& 0\\ 
    $(3,4,5)$& 0& 118/1995& 0\\ 
    $(1,5,2)$& 0& 268/665& 0\\ 
    $(1,4,3,2,5)$& 0& -344/1995& -1/15\\ 
    $(1,3)$$(2,4)$& 0& -211/1995& 1/15\\ 
    $(1,2,3,5,4)$& 0& -127/1995 & 0\\ 
    $(2,3)$$(4,5)$& 0& 109/285& 0\\ 
    $(1,5,3)$& 0 & -22/95 & 0\\ 
    $(1,4,3,5,2)$& 0& -1/57& 0\\ 
    $(1,3,4,2,5)$& 0& 17/285& -1/15\\ 
    $(1,2,4)$& 0 & 11/57& 1/15\\ 
    $(2,5,3)$& 0& -17/285& 0\\ 
    $(1,5,2,4,3)$& 0& 8/57& 0\\ 
    $(1,4,2,3,5)$& 0& 27/95& -1/15\\ 
    $(1,3,4)$& 0& 22/95& 1/15\\ 
    $(1,2)$$(4,5)$& 0& -6/19 & 0\\ 
    $(2,4,5)$& 0 & -211/1995& 0\\ 
    $(1,5)$$(2,3)$& 0& -127/1995& -1/15\\ 
    $(1,4,3)$& 0& 118/1995& 1/15\\ 
    $(1,3,5,4,2)$& 0& 268/665& 0\\
    $(1,2,5,3,4)$& 0& -344/1995& 0\\
      \hline
    \end{tabular}
    \caption{Coefficients of idempotents in $\mathbb{Z}_{(2)}A_5$ which lift projective indecomposable modules of $\mathbb{F}_2A_5$}
    \label{F2A5-idemopotents}
\end{table}

For $p=3$ and $p=5$, the conditions of Conjecture~\ref{KGconj} also hold:
\[
\begin{aligned}
\text{Reduct}_3(\rho_1) = {}_{3}\phi_1 &\hspace{10ex} \text{Reduct}_3(\rho_6) = {}_{3}\phi_6 \\
\text{Reduct}_3(\rho_4) = {}_{3}\phi_4 &\hspace{10ex} \text{Reduct}_3(\rho_5) = {}_{3}\phi_1 + {{}_{3}\phi_4}
\end{aligned}
\]
Thus, $\text{Im}\big(\mathbf{G}^+(\bQ A_5) \xrightarrow{\alpha} \mathbf{G}^+(\bF_3 A_5)\big) = \text{span}_\mathbb{N}([{}_{3}\phi_1],[{}_{3}\phi_4],[{}_{3}\phi_6])$ and
\[
[P({}_{3}\phi_1)] = 2[{}_{3}\phi_1] + [{{}_{3}\phi_4}], \  [P({{}_{3}\phi_4})] = [{}_{3}\phi_1] + 2[{{}_{3}\phi_4}], \  [P({}_{3}\phi_6)] = [{}_{3}\phi_6].
\]
Moreover, we have the following reductions modulo $5$:
\[
\begin{aligned}
\text{Reduct}_5(\rho_1) = {}_{5}\phi_1 &\hspace{10ex} \text{Reduct}_5(\rho_6) = 2{}_{5}\phi_3 \\
\text{Reduct}_5(\rho_4) = {}_{5}\phi_1 + {}_{5}\phi_3 &\hspace{10ex} \text{Reduct}_5(\rho_5) = {}_{5}\phi_5
\end{aligned}
\]
Thus, $\text{Im}\big(\mathbf{G}^+(\bQ A_5) \xrightarrow{\alpha} \mathbf{G}^+(\bF_5 A_5)\big) = \text{span}_\mathbb{N}([{}_{5}\phi_1],2[{}_{5}\phi_3], [{}_{5}\phi_1]+[{{}_{5}\phi_3}],[{}_{5}\phi_5])$ and 
\[
[P({}_{5}\phi_1)] = 2[{}_{5}\phi_1] + [{{}_{5}\phi_3}], \  [P({{}_{5}\phi_3})] = [{}_{5}\phi_1] + 3[{{}_{5}\phi_3}], \  [P({}_{5}\phi_5)] = [{}_{5}\phi_5].
\]
The strategies to construct idempotents were similar to the $p=2$ case. 
The code and further details are deposited on GitHub \cite{Jo}. Once again,  $\mathcal{L}(P({}_p\phi_d)) \in \mathbb{Z}_{(p)}A_5$ denotes an idempotent which ``lifts" $P({}_p\phi_d)$. The tables Table \ref{F3A5-idemopotents} and Table \ref{F5A5-idemopotents} describe the idempotents in $\mathbb{Z}_{(3)}A_5$ and $\mathbb{Z}_{(5)}A_5$, respectively.

\begin{table}[h!]
    \fontsize{5.3pt}{5.3pt}\selectfont
    \centering
    \begin{tabular}[t]{|c|c|c|c|}
      \hline 
      \phantom{\Big|}$A_5$ element & $\mathcal{L}(P({}_3\phi_1))$ & $\mathcal{L}(P({}_3\phi_6))$ & $\mathcal{L}(P({}_3\phi_4))$ \\
      \hline
    $()$& 1/10& 1/10 & 3/20\\ 
    $(1,5,4,3,2)$ & 3/80& 0 & -3/80\\ 
    $(1,4,2,5,3)$ & 3/80& 0 & -3/80\\ 
    $(1,3,5,2,4)$ & 3/80& 0 & -3/80\\ 
    $(1,2,3,4,5)$ & 3/80& 1/10 & -3/80\\ 
    $(2,3,4)$& -1/40 & 0 & 1/40\\ 
    $(1,5,4)$& -1/40& 1/10 & 1/40\\ 
    $(1,4,5,3,2)$& 3/80& 0 & -3/80\\ 
    $(1,3)$$(2,5)$& 3/80& 0 & -3/80\\ 
    $(1,2,4,3,5)$& -1/40& 0 & 1/40\\ 
    $(2,5,4)$& 3/80& 0& -3/80\\ 
    $(1,5,3,2,4)$& 3/80& 0& -3/80\\ 
    $(1,4,5,2,3)$ & 3/80& 0& -3/80\\
    $(1,3,5)$& 3/80& -1/10& -3/80\\ 
    $(1,2)$$(3,4)$& 1/10& 0 & 3/20\\ 
    $(2,4,3)$& -1/40& 0  & 1/40\\ 
    $(1,5,4,2,3)$& 3/80& 0 & -3/80\\ 
    $(1,4)$$(3,5)$& -1/40& 0  & 1/40\\
    $(1,3,4,5,2)$& -1/40& 0  & 1/40\\ 
    $(1,2,5)$& 3/80& -1/10  & -3/80\\ 
    $(3,5,4)$& 3/80& 0  & -3/80\\ 
    $(1,5,3,4,2)$& -1/40& 0  & 1/40\\ 
    $(1,4)$$(2,5)$& -1/40& 0  & 1/40\\ 
    $(1,3,2,4,5)$& 3/80& 1/10 & -3/80\\ 
    $(1,2,3)$ & -1/40& 1/10 & 1/40\\ 
    $(2,3,5)$ & -1/40& 0 & 1/40\\ 
    $(1,5)$$(3,4)$& 3/80& 0& -3/80\\ 
    $(1,4,2)$& -1/40& 0& 1/40\\ 
    $(1,3,2,5,4)$& 3/80& 1/10   & -3/80\\ 
    $(1,2,4,5,3)$& -1/40& 0  & 1/40\\
      \hline
    \end{tabular}
    \hspace{1ex}
    \begin{tabular}[t]{|c|c|c|c|}
      \hline
      \phantom{\Big|}$A_5$ element & $\mathcal{L}(P({}_3\phi_1))$ & $\mathcal{L}(P({}_3\phi_6))$ & $\mathcal{L}(P({}_3\phi_4))$ \\
      \hline    
    $(2,5)$$(3,4)$& 3/80& 0  & -3/80\\ 
    $(1,5)$$(2,4)$& 3/80& 0 & -3/80\\ 
    $(1,4)$$(2,3)$& 1/10& -1/10 & 3/20\\ 
    $(1,3)$$(4,5)$& 3/80& -1/10 & -3/80\\ 
    $(1,2)$$(3,5)$& 3/80& 0 & -3/80\\ 
    $(2,4)$$(3,5)$& 3/80& 0 & -3/80\\ 
    $(1,5,2,3,4)$& 3/80& 0 & -3/80\\ 
    $(1,4,5)$& -1/40& 1/10 & 1/40\\ 
    $(1,3,2)$& -1/40& 1/10 & 1/40\\ 
    $(1,2,5,4,3)$& -1/40& 0 & 1/40\\ 
    $(3,4,5)$ & 3/80& 0& -3/80\\ 
    $(1,5,2)$ & 3/80& 0& -3/80\\ 
    $(1,4,3,2,5)$& 3/80& 0& -3/80\\ 
    $(1,3)$$(2,4)$& 1/10& 0& 3/20\\ 
    $(1,2,3,5,4)$& 3/80& 1/10& -3/80\\ 
    $(2,3)$$(4,5)$& -1/40& -1/10& 1/40\\ 
    $(1,5,3)$& 3/80& 0 & -3/80\\ 
    $(1,4,3,5,2)$& 3/80& 0  & -3/80\\ 
    $(1,3,4,2,5)$& -1/40& 0  & 1/40\\ 
    $(1,2,4)$& -1/40 & -1/10  & 1/40\\ 
    $(2,5,3)$& -1/40& 0  & 1/40\\ 
    $(1,5,2,4,3)$& -1/40& 0  & 1/40\\ 
    $(1,4,2,3,5)$& 3/80& 0 & -3/80\\ 
    $(1,3,4)$& -1/40& -1/10 & 1/40\\ 
    $(1,2)$$(4,5)$& 3/80& -1/10 & -3/80\\ 
    $(2,4,5)$& 3/80& 0 & -3/80\\ 
    $(1,5)$$(2,3)$& -1/40& -1/10  & 1/40\\ 
    $(1,4,3)$& -1/40& 0  & 1/40\\ 
    $(1,3,5,4,2)$& -1/40& 0  & 1/40\\
    $(1,2,5,3,4)$ & 3/80& 0  & -3/80\\
      \hline
    \end{tabular}
    \caption{Coefficients of idempotents in $\mathbb{Z}_{(3)}A_5$ which lift projective indecomposable modules of $\mathbb{F}_3A_5$}
    \label{F3A5-idemopotents}
\end{table}

\begin{table}[h!]
    \fontsize{5.3pt}{5.3pt}\selectfont
    \centering
    \begin{tabular}[t]{|c|c|c|c|}
      \hline 
      \phantom{\Big|}$A_5$ element & $\mathcal{L}(P({}_5\phi_1))$ & $\mathcal{L}(P({}_5\phi_3))$ & $\mathcal{L}(P({}_5\phi_5))$ \\
      \hline
    $()$ & 1/12 & 1/6& 1/12\\ 
    $(1,5,4,3,2)$   & 0 & 0 & 0\\ 
    $(1,4,2,5,3)$   & 0 & 0& -1/12\\ 
    $(1,3,5,2,4)$   & 0 & 0& 0\\ 
    $(1,2,3,4,5)$   & 0 & 0 & 1/12\\ 
    $(2,3,4)$  & 0 & 0 & 0\\ 
    $(1,5,4)$ & 1/12 & 0 & 0\\ 
    $(1,4,5,3,2)$  & 0 & 0 & 1/12\\ 
    $(1,3)$$(2,5)$  & 0 & 0  & 0\\ 
    $(1,2,4,3,5)$  & 0 & 0  & 0\\ 
    $(2,5,4)$  & 0 & 0  & 0\\ 
    $(1,5,3,2,4)$  & 0 & 0  & -1/12\\ 
    $(1,4,5,2,3)$  & 0  & 0 & 0\\
    $(1,3,5)$ & 1/12 & 0  & 1/12\\ 
    $(1,2)$$(3,4)$  & 0  & -1/6  & 1/12\\ 
    $(2,4,3)$  & 0 & 0 & -1/12\\ 
    $(1,5,4,2,3)$  & 0 & 0 & 0\\ 
    $(1,4)$$(3,5)$  & 1/12 & 0 & 0 \\
    $(1,3,4,5,2)$ & 0 & 0 & 0\\ 
    $(1,2,5)$ & 0 & 0  & -1/12\\ 
    $(3,5,4)$& 1/12 & 1/6 & 0\\ 
    $(1,5,3,4,2)$  & 0 & 0 & 0\\ 
    $(1,4)$$(2,5)$  & 0 & 0 & 0\\ 
    $(1,3,2,4,5)$  & 0 & 0 & -1/12\\ 
    $(1,2,3)$  & 0 & 0 & -1/12\\ 
    $(2,3,5)$  & 0 & 0  & 0\\ 
    $(1,5)$$(3,4)$ & 1/12 & 0 & -1/12\\ 
    $(1,4,2)$ & 0 & 0 & -1/12\\ 
    $(1,3,2,5,4)$ & 0 & 0 & 0\\ 
    $(1,2,4,5,3)$ & 0 & 0 & 0\\
      \hline
    \end{tabular}
    \hspace{1ex}
    \begin{tabular}[t]{|c|c|c|c|}
      \hline
      \phantom{\Big|}$A_5$ element & $\mathcal{L}(P({}_5\phi_1))$ & $\mathcal{L}(P({}_5\phi_3))$ & $\mathcal{L}(P({}_5\phi_5))$ \\
      \hline    
    $(2,5)$$(3,4)$ & 0 & 0 & 0\\ 
    $(1,5)$$(2,4)$ & 0 & 0 & 1/12\\ 
    $(1,4)$$(2,3)$ & 0 & 0 & 1/12\\ 
    $(1,3)$$(4,5)$ & 1/12 & 0  & 0\\ 
    $(1,2)$$(3,5)$ & 0  & -1/6& -1/12\\ 
    $(2,4)$$(3,5)$ & 0 & 0 & 1/12\\ 
    $(1,5,2,3,4)$ & 0 & 0 & 0\\ 
    $(1,4,5)$& 1/12 & 0 & 0\\ 
    $(1,3,2)$  & 0 & 0 & 0\\ 
    $(1,2,5,4,3)$  & 0 & 0 & 0\\ 
    $(3,4,5)$ & 1/12  & 1/6  & -1/12\\ 
    $(1,5,2)$  & 0 & 0  & 0\\ 
    $(1,4,3,2,5)$  & 0 & 0 & 1/12\\ 
    $(1,3)$$(2,4)$  & 0 & 0 & 1/12\\ 
    $(1,2,3,5,4)$  & 0 & 0  & 0\\ 
    $(2,3)$$(4,5)$  & 0 & 0  & 0\\ 
    $(1,5,3)$   & 1/12 & 0 & 1/12\\ 
    $(1,4,3,5,2)$ & 0 & 0 & 0\\ 
    $(1,3,4,2,5)$ & 0 & 0 & 0\\ 
    $(1,2,4)$ & 0 & 0 & 0\\ 
    $(2,5,3)$ & 0 & 0 & 0\\ 
    $(1,5,2,4,3)$ & 0 & 0 & 0\\ 
    $(1,4,2,3,5)$ & 0 & 0 & -1/12\\ 
    $(1,3,4)$  & 1/12 & 0 & -1/12\\ 
    $(1,2)$$(4,5)$   & 0  & -1/6  & 0\\ 
    $(2,4,5)$   & 0 & 0  & 0\\ 
    $(1,5)$$(2,3)$   & 0 & 0  & 0\\ 
    $(1,4,3)$  & 1/12 & 0  & 0\\ 
    $(1,3,5,4,2)$  & 0 & 0  & 0\\
    $(1,2,5,3,4)$  & 0 & 0 & 1/12\\
      \hline
    \end{tabular}
    \caption{Coefficients of idempotents in $\mathbb{Z}_{(5)}A_5$ which lift projective indecomposable modules of $\mathbb{F}_5A_5$}
    \label{F5A5-idemopotents}
\end{table}


\end{ex}

\newpage 
\bibstyle{numeric}
\printbibliography

\end{document}